\newtheorem{teo}{Theorem}[section]
\newtheorem{lemma}{Lemma}[section]
\newtheorem{oss}{Remark}[section]
\newtheorem{prop}{Proposition}[section]
\newcommand{\ra}{\longrightarrow}
\newcommand{\dis}{\displaystyle}
\newcommand{\bs}{\backslash}
\newcommand{\ov}{\overline}
\newcommand{\R}{\mathbb{R}}
\newcommand{\C}{\mathbb{C}}
\newcommand{\mb}{\mathbb}
\newcommand{\eps}{\varepsilon}
\newcommand{\ph}{\varphi}
\newcommand{\tabitem}{\hspace{0.3cm}~~\llap{\textbullet}~~}
\newcommand{\D}[2]{ \frac{\partial #1}{ \partial #2}}
\newenvironment{Si}[1]{\left\{\begin{array}{#1}}{\end{array} \right. }
\DeclareMathOperator{\dv}{div}
\title{Onofri-type inequalities for singular Liouville equations}
\author{Gabriele Mancini\thanks{S.I.S.S.A/I.S.A.S, Via Bonomea 265, 34136 Trieste (Italy) - gmancini@sissa.it\\The author is supported by the FIRB project {\em Analysis and Beyond}, by the PRINs {\em Variational Methods and Nonlinear PDE's} and {\em Variational and perturbative aspects of nonlinear differential problems} and by the Mathematics Department at the University of Warwick.}}
\date{}
\begin{document}
\maketitle
\begin{abstract} 
We study the blow-up behaviour of minimizing sequences for the singular Moser-Trudinger functional on compact surfaces. Assuming non-existence of minimum points, we give an estimate for the infimum value of the functional. This result can be applied to give  sharp Onofri-type inequalities on the sphere in the presence of at most two singularities. 
\end{abstract}

\section{Introduction}
Let $(\Sigma,g)$ be a smooth, compact Riemannian surface; the standard Moser-Trudinger inequality (see \cite{mos}, \cite{fontana}) states that 
\begin{equation}\label{MT}
\log\left(\frac{1}{|\Sigma|}\int_{\Sigma} e^{u-\ov{u}}dv_g\right)\le \frac{1}{16\pi}\int_{\Sigma} |\nabla_g u|^2 dv_g + C(\Sigma,g)\qquad \forall\; u\in H^1(\Sigma)
\end{equation}
where $C(\Sigma,g)$ is a constant depending only on $\Sigma$ and $g$, and the coefficient $\frac{1}{16\pi}$ is optimal. A sharp version of (\ref{MT}) was proved by Onofri in \cite{onofri} for the sphere endowed with the standard Euclidian metric $g_0$. He identified the sharp value of $C$ and the family of functions realizing equality, proving 
\begin{equation}\label{onofri}
\log\left(\frac{1}{4\pi}\int_{S^2} e^{u-\ov{u}} dv_{g_0}\right) \le \frac{1}{16\pi}\int_{S^2} |\nabla_{g_0} u|^2dv_{g_0}
\end{equation}
with equality holding if and only if the metric $e^{u}g$ has constant positive Gaussian curvature, or, equivalently, $u=\log|\det d \ph|+c$ with 
$c\in \R$ and $\ph$ conformal diffeomorphism of $S^2$. Onofri's inequality played an important role (see \cite{CY1}, \cite{CY2}) in the variational approach to the equation
$$
\Delta_{g_0} u +K  \;e^{u}=1
$$
which is connected to the classical problem of prescribing the Gaussian curvature of $S^2$.
In this paper we will consider extensions of Onofri's result in connection with the study of the more general equation
\begin{equation}\label{eq sing}
-\Delta_g v = \rho  \left( \frac{K e^{v}}{\int_{\Sigma} K e^{v}dv_g}-\frac{1}{|\Sigma|} \right)-4\pi \sum_{i=1}^m \alpha_i\left(\delta_{p_i}-\frac{1}{|\Sigma|}\right),
\end{equation}
where $K\in C^\infty(\Sigma)$ is a positive function, $\rho>0$, $p_1,\ldots, p_m\in \Sigma$ and $\alpha_1,\ldots,\alpha_m \in (-1,+\infty)$.
This is known as the singular Liouville equation and arises in several problems in Riemannian geometry and mathematical physics. When $(\Sigma,g)=(S^2,g_0)$ and $\rho = 8\pi +4\pi \sum_{i=1}^m \alpha_i$, solutions of (\ref{eq sing}) provide  metrics on $S^2$ with prescribed Gaussian curvature $K$ and conical singularities of angle $2\pi(1+\alpha_i)$ (or of order $\alpha_i$) in $p_i$, $i=1,\ldots,m$ (see for example \cite{BarMalDeM}, \cite{ChenLi}, \cite{troyanov}). Equation (\ref{eq sing}) also appears in the description of Abelian Chern-Simons vortices in superconductivity and Electroweak theory (\cite{HongKim}, \cite{Tarantello}). We refer to \cite{BarMal}, \cite{Carl}, \cite{CarMal1}, \cite{CarMal2}, \cite{MalRui}, for some recent existence results.
Systems of Liouville equations have applications in the description of holomorphic curves in $\C\mathbb{P}^n$ (\cite{BolWoo}, \cite{Calabi}) and in the nonabelian Chern-Simons theory which might have applications in high temperature superconductivity (see \cite{Tar2} and references therein). 
Denoting by $G_p$ the Green's function at $p$, namely the solution of 
\begin{equation*}
\begin{Si}{l}
-\Delta_g G_{p} = \delta_p- \frac{1}{|\Sigma|}\\
\int_{\Sigma} G_p\; dv_g =0
\end{Si},
\end{equation*}
the change of variables
$$
u= v+ 4\pi\sum_{i=1}^m \alpha_i G_{p_i} 
$$
transforms \eqref{eq sing} into
\begin{equation}\label{eq nonsing}
-\Delta_g u = \rho  \left( \frac{h e^{u}}{\int_{\Sigma} h e^{u}dv_g}-\frac{1}{|\Sigma|} \right)
\end{equation}
where 
\begin{equation}\label{h}
h= K\prod_{1\le i\le m} e^{-4\pi \alpha_i G_{p_i}}
\end{equation}
satisfies
\begin{equation}\label{dist}
h(p)\approx d(p,p_i)^{2\alpha_i} \mbox{ for } p\approx p_i.
\end{equation}

In \cite{troyanov}, studying curvature functions for surfaces with conical singularities, Troyanov proved that if $h\in C^\infty(\Sigma\bs\{p_1,\ldots,p_m\})$ is a positive function satisfying  \eqref{dist}, then
\begin{equation}\label{troy}
\log\left(\frac{1}{|\Sigma|}\int_{\Sigma} h \; e^{u-\ov{u}}dv_g\right) \le \frac{1}{16\pi \dis{ \min\left\{1,1+\min_{1\le i\le m}\alpha _i\right\}} }\int_{\Sigma} |\nabla_g u|^2 dv_g+ C(\Sigma,g,h).
\end{equation}

The optimal  constant  $C(\Sigma,g,h)$ can be obtained by minimizing the functional
$$
J_{\ov{\rho}}(u)=\frac{1}{2}\int_{\Sigma} |\nabla_g u|^2 dv_g+ \frac{\ov{\rho}}{|\Sigma|} \int_{\Sigma} u\; dv_g -\ov{\rho} \log\left(\frac{1}{|\Sigma|}\int_{\Sigma} he^{u}dv_g\right),
$$
where $\ov{\rho} =\dis{\min\left\{1,1+\min_{1\le i\le m}\alpha _i\right\} } $. 
In this paper we will assume non-existence of minimum points for $J_{\ov{\rho}}$ and exploit known blow-up results (\cite{bar-tar}, \cite{BarLinTar}, \cite{bar-mont})  to describe the behaviour of a suitable minimizing sequence and compute $\dis{\inf_{H^1(\Sigma)}J_{\ov{\rho}}}$. The same technique was used by Ding, Jost, Li and Wang  \cite{djlw} to give an existence result for (\ref{eq sing}) in the regular case. From their proof it follows that if $\alpha_i=0$  $\forall\; i$ and there is no minimum point for $J_{\ov{\rho}}$, then
$$
 \inf_{H^1(\Sigma)} J_{\ov{\rho}} =  -8\pi\left( 1+\log\left(\frac{\pi}{|\Sigma|}\right)  +\max_{p\in \Sigma} \left\{4\pi A(p)+ \log h(p)\right\}\right)
$$ 
where $A(p)$ is the value in $p$ of the regular part of $G_p$. 
Here we extend this result to the general case proving:

\begin{teo}\label{teo 1}
Assume that $h$ satisfies \eqref{h} with $K\in C^\infty(\Sigma)$, $K>0$, $\alpha_i \in (-1,+\infty)\bs\{0\}$, and that there is no minimum point of $J_{\ov{\rho}}$. If $\alpha:=\dis{\min_{1\le i\le m} \alpha_i<0}$, then
$$
\inf_{H^1(\Sigma)} J_{\ov{\rho}}= -8\pi(1+\alpha) \left( 1+\log\left(\frac{\pi}{|\Sigma|}\right)+\max_{1\le i\le m,\alpha_i=\alpha}\left\{4\pi A(p_i)+ \log  \left(\frac{K(p_i)}{1+\alpha}\prod_{j\neq i} e^{-4\pi\alpha_j G_{p_j}(p_i)}\right)\right\}\right)
$$
while if $\dis{\alpha>0}$
$$
\inf_{H^1(\Sigma)} J_{\ov{\rho}}= -8\pi \left( 1+\log\left(\frac{\pi}{|\Sigma|}\right)+\max_{p\in \Sigma\bs \{p_1,\ldots,p_m\}} \left\{4\pi A(p)+ \log h(p)\right\}\right).
$$
\end{teo}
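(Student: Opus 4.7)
The strategy adapts the Ding--Jost--Li--Wang scheme \cite{djlw} from the regular case to the singular setting by replacing the standard Liouville bubble with a Prajapat--Tarantello bubble at the singular points. I will establish matching upper and lower bounds on $\inf J_{\ov{\rho}}$; since by assumption the infimum is not attained, both are obtained by taking a limit along concentrating sequences.

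For the upper bound, I fix an admissible concentration point $p\in\Sigma$: either $p=p_i$ with $\alpha_i=\alpha$ when $\alpha<0$, or $p\notin\{p_1,\ldots,p_m\}$ when $\alpha>0$. Set $\alpha_p=\alpha$ in the first situation and $\alpha_p=0$ in the second. In isothermal coordinates around $p$, the local profile of the test function will be the Prajapat--Tarantello bubble
$$
U_{\varepsilon}(x)=\log\frac{8(1+\alpha_p)^2\,\varepsilon^{2(1+\alpha_p)}}{\left(\varepsilon^{2(1+\alpha_p)}+|x|^{2(1+\alpha_p)}\right)^{2}}\qquad\text{for }|x|<r,
$$
glued in an annulus $\{r<d(\,\cdot\,,p)<2r\}$ to a multiple of the Green's function $G_p$ plus a constant fixed by $C^0$--continuity. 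A direct asymptotic expansion as $\varepsilon\to 0$ uses the identity $\int_{\R^2}|x|^{2\alpha_p}e^{U_{\varepsilon}}\,dx=8\pi(1+\alpha_p)$, the Green's function development $G_p(x)=-\frac{1}{2\pi}\log d(x,p)+A(p)+o(1)$, and the local asymptotics \eqref{dist} of $h$. The logarithmically divergent parts of $\tfrac{1}{2}\int|\nabla\phi_{\varepsilon,p}|^2$ and of the $\log$ term of $J_{\ov{\rho}}(\phi_{\varepsilon,p})$ cancel, leaving a finite remainder that matches the right--hand side of the theorem for this $p$. Minimizing the expression (equivalently, maximizing the bracketed quantity) over admissible $p$ yields the desired upper bound.

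For the lower bound I take any minimizing sequence $u_n$; since it cannot be compact in $H^1(\Sigma)$, it blows up. Either via a subcritical approximation with $\rho_n\nearrow$ critical (whose minimizers exist by strict Moser--Trudinger), or by a direct compactness analysis, I may assume that $u_n$ is a blowing--up almost--critical sequence for $J_{\ov{\rho}}$ and invoke the results of \cite{bar-tar,BarLinTar,bar-mont}. These force concentration at a single point $p_0\in\Sigma$. The mass quantization along the Prajapat--Tarantello family ($8\pi(1+\alpha_{i_0})$ at a singularity $p_{i_0}$, $8\pi$ at a regular point), together with the total mass dictated by $\ov{\rho}$, forces $p_0=p_{i_0}$ with $\alpha_{i_0}=\alpha$ when $\alpha<0$, and $p_0\notin\{p_1,\ldots,p_m\}$ when $\alpha>0$. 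Using the explicit limiting profile near $p_0$ and the Green's function representation outside, one evaluates $\lim_n J_{\ov{\rho}}(u_n)$ and obtains the formula of the theorem corresponding to this $p_0$. The minimizing property of $u_n$ selects the admissible $p_0$ that maximizes the bracket, matching the upper bound.

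The main technical obstacle is the sharp bookkeeping in the asymptotic expansions. Near a singular concentration point $p_{i_0}$, the weight $h$ contributes its regular factor $K(p_{i_0})$ together with the cross--terms $\prod_{j\ne i_0}e^{-4\pi\alpha_j G_{p_j}(p_{i_0})}$, while the Prajapat--Tarantello normalization produces the $\tfrac{1}{1+\alpha}$ factor inside the logarithm; the Green's function contributes $A(p_0)$ through its regular part, and $\log(\pi/|\Sigma|)$ arises from the standard integral $\int_{\R^2}(1+|y|^{2(1+\alpha_p)})^{-2}\,dy$ evaluated in the bubble. Ensuring the exact cancellation of the logarithmic divergences and the correct assembly of all finite terms into the stated formulas, both in the test--function computation and in the limit of the minimizing sequence, is the computational core of the proof.
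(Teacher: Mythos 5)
Your outline follows, in essence, the same route as the paper: glued Prajapat--Tarantello bubbles for the upper bound, and a blow-up analysis of concentrating minimizers for the lower bound. Two points in your lower-bound paragraph, however, need repair. First, the alternative ``or by a direct compactness analysis \dots I may assume that $u_n$ is a blowing-up almost-critical sequence'' does not work as stated: the concentration--quantization results you invoke (Proposition \ref{prop conc-comp} in the paper, after \cite{bar-tar}, \cite{bar-mont}) apply to sequences of \emph{solutions} of the Liouville equation, while an arbitrary minimizing sequence for $J_{\ov{\rho}}$ satisfies no PDE. This is exactly why the paper works with minimizers $u_\eps$ of the subcritical functionals $J_{\rho_\eps}$, $\rho_\eps=\ov{\rho}-\eps$, which exist by coercivity (Troyanov's inequality) and solve \eqref{eq desing}, and then transfers the estimate to $\inf J_{\ov{\rho}}$ via $\inf J_{\rho_\eps}\le J_{\rho_\eps}(v)\to J_{\ov{\rho}}(v)$ for every fixed $v$. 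Only the subcritical branch of your hedge is viable; it should be the argument, not an option.

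Second, you claim to \emph{evaluate} $\lim_n J_{\ov{\rho}}(u_n)$ at the blow-up point. The tools you list (limiting rescaled profile, Green's representation away from the point) control the neck region only from one side; the paper's key neck estimate, Lemma \ref{lemma stima}, is a pointwise lower bound for $u_\eps$ obtained from the maximum principle, and the energy splitting then yields only the inequality $\liminf_\eps J_{\rho_\eps}(u_\eps)\ge -\ov{\rho}\bigl(1+\log(\pi/|\Sigma|)+4\pi A(p_0)+\log(\dots)\bigr)$ at the (unknown) blow-up point $p_0$ with $\beta(p_0)=\alpha$, which is automatically $\ge$ the right-hand side of the theorem since the bracket at $p_0$ is at most the maximum. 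That one-sided bound suffices; equality, and the fact that $p_0$ maximizes the bracket, are recovered only a posteriori by matching with the test-function upper bound, so your remark that ``the minimizing property selects the maximizing $p_0$'' is a consequence, not an ingredient. With these two adjustments your scheme coincides with the paper's proof, and the remaining work is indeed the expansions you identify as the computational core.
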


In the last part of the paper we consider the case of the standard sphere with  $K\equiv 1$ and at most two singularities. 
When $m=1$ a simple Kazdan-Warner type identity proves non-existence of solutions for $(\ref{eq nonsing})$.  Thus, one can apply theorem \ref{teo 1} to obtain the following sharp version of (\ref{troy}):

\begin{teo}\label{una sing}
If $h=e^{-4\pi \alpha_1 G_{p_1}}$ with $\alpha_1\neq 0$, then $\forall\; u\in H^1(S^2)$
$$\log\left(\frac{1}{4\pi}\int_{S^2} h e^{u-\ov{u}}dv_{g_0}\right)<\frac{1}{16\pi\min\{1,1+\alpha_1\}}\int_{S^2} |\nabla u|^2dv_{g_0}+\max\left\{\alpha_1,-\log(1+\alpha_1)\right\}.$$
\end{teo}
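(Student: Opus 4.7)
The plan is to apply Theorem \ref{teo 1} after verifying its non-attainment hypothesis, and then to carry out the explicit computation on the standard sphere.

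First, I would establish non-existence of a minimum point for $J_{\bar\rho}$ via a Kazdan-Warner type obstruction. A minimizer would solve \eqref{eq nonsing} with $\rho=\bar\rho$; testing this equation against the gradient of the height function $x_{p_1}(x):=\langle x,p_1\rangle_{\R^3}$, which generates a conformal one-parameter group on $(S^2,g_0)$, produces the identity
\[
\int_{S^2}\langle \nabla h,\nabla x_{p_1}\rangle_{g_0}\, e^{u}\, dv_{g_0}=0.
\]
Since $\nabla h=-4\pi\alpha_1\, h\,\nabla G_{p_1}$ and both $\nabla G_{p_1}$ and $\nabla x_{p_1}$ point toward $p_1$ on $S^2\setminus\{p_1,-p_1\}$, their inner product is strictly positive almost everywhere. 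Combined with $h e^{u}>0$ and $\alpha_1\neq 0$, this forces the left-hand side to be strictly nonzero, contradicting the identity.

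Second, I would evaluate the formula from Theorem \ref{teo 1} on $(S^2,g_0)$. A direct computation in stereographic coordinates centered at $p_1$ gives
\[
G_{p_1}(y)=-\frac{1}{2\pi}\log|y|+\frac{1}{4\pi}\log(1+|y|^2)-\frac{1}{4\pi},
\]
whence $A(p)\equiv(\log 4-1)/(4\pi)$ on $S^2$ and $G_{p_1}(-p_1)=-1/(4\pi)$. For $\alpha_1<0$ the unique index contributes $4\pi A(p_1)-\log(1+\alpha_1)=\log 4-1-\log(1+\alpha_1)$; with $\log(\pi/|\Sigma|)=-\log 4$ this collapses to $\inf J_{\bar\rho}=8\pi(1+\alpha_1)\log(1+\alpha_1)$. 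For $\alpha_1>0$ the function $p\mapsto 4\pi A(p)-4\pi\alpha_1 G_{p_1}(p)$ on $S^2\setminus\{p_1\}$ is maximized at the antipode $-p_1$ with value $\log 4-1+\alpha_1$, yielding $\inf J_{\bar\rho}=-8\pi\alpha_1$.

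Third, I would derive the Onofri-type inequality from the strict bound $J_{\bar\rho}(u)>\inf J_{\bar\rho}$, valid for every $u\in H^1(S^2)$ by non-attainment. Matching the gradient coefficient with the Troyanov exponent $1/(16\pi\bar\rho)$ as identified in the blow-up analysis underlying Theorem \ref{teo 1}, the optimal additive constant equals $-\inf J_{\bar\rho}/(8\pi\bar\rho)$, which reduces to $-\log(1+\alpha_1)$ when $\alpha_1<0$ and to $\alpha_1$ when $\alpha_1>0$; in each regime this coincides with $\max\{\alpha_1,-\log(1+\alpha_1)\}$, and strictness is inherited from non-attainment. The main obstacle is the first step: the Kazdan-Warner identity must be derived rigorously for the specific singular equation, and the sign analysis of $\langle\nabla G_{p_1},\nabla x_{p_1}\rangle_{g_0}$ has to be controlled all the way to the cone point, where both gradients blow up. The Green's function computation on the round sphere is then routine.
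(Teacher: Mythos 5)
Your overall architecture matches the paper's: prove non-attainment of $\inf J_{\ov\rho}$ via a Kazdan--Warner obstruction, then read off the sharp constant from Theorem \ref{teo 1} using $4\pi A(p)\equiv\log 4-1$ and $\log(\pi/|\Sigma|)=-\log 4$ on the round sphere (your values $\inf J_{\ov\rho}=8\pi(1+\alpha_1)\log(1+\alpha_1)$ for $\alpha_1<0$ and $-8\pi\alpha_1$ for $\alpha_1>0$ agree with the paper, and strictness from non-attainment is the same step). However, there is a genuine gap in your first step when $\alpha_1<0$. The Kazdan--Warner identity for \eqref{eq nonsing} is not $\int_{S^2}\nabla h\cdot\nabla x_3\,e^u\,dv_{g_0}=0$; it is
$$
\int_{S^2}\nabla h\cdot\nabla x_3\,e^u\,dv_{g_0}=\Bigl(2-\tfrac{\rho}{4\pi}\Bigr)\int_{S^2}h e^u x_3\,dv_{g_0},
$$
and the right-hand side vanishes only when $\rho=8\pi$, i.e.\ only in your case $\alpha_1>0$. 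For $\alpha_1<0$ one has $\ov\rho=8\pi(1+\alpha_1)\neq 8\pi$, the extra term survives, and your sign argument (left side strictly nonzero, right side zero) collapses: with $\nabla h\cdot\nabla x_3=-\alpha_1 h(1+x_3)$ and the normalization $\int h e^u=1$, the correct identity reduces to $-\alpha_1=-\alpha_1\int_{S^2}he^u x_3\,dv_{g_0}$, and both sides are nonzero with the same sign. The paper's contradiction is quantitative, not sign-based: the identity forces $\int_{S^2}he^ux_3\,dv_{g_0}=1$ (respectively $=-1$ when $\alpha_1>0$), which is impossible because $he^u\,dv_{g_0}$ is a probability measure and $|x_3|<1$ almost everywhere. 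You need this version of the argument to cover $\alpha_1<0$.

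A secondary point: you correctly flag that the identity must be justified despite the singularity of $h$ and the blow-up of $\nabla G_{p_1}$ at the cone point, but you do not supply the argument. The paper does this through Lemma \ref{lemma traccia} (pointwise gradient bounds $|\nabla u|\lesssim d(x,p_i)^{2\alpha_i+1}$, logarithmic at $\alpha_i=-1/2$, bounded for $\alpha_i>-1/2$) and the excision argument of Proposition \ref{prop poho}, where all boundary terms on $\partial B_\delta(p_i)$ are shown to be $o_\delta(1)$; some such estimate is indispensable, since for $\alpha_1$ close to $-1$ the naive boundary integrals are not obviously negligible. Finally, two small slips that do not affect your conclusions but should be fixed: the Troyanov coefficient is $\frac{1}{16\pi(1+\alpha)}=\frac{1}{2\ov\rho}$, not $\frac{1}{16\pi\ov\rho}$, and by \eqref{C e inf} the additive constant is $-\inf J_{\ov\rho}/\ov\rho$, not $-\inf J_{\ov\rho}/(8\pi\ov\rho)$.
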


The same non-existence argument works for $m=2$, $\min\{\alpha_1,\alpha_2\}<0$ and $\alpha_1 \neq \alpha_2$ if the singularities are located in two antipodal points.

\begin{teo}\label{due sing}
Assume $h=e^{-4\pi \alpha_1 G_{p_1}-4\pi \alpha_2 G_{p_2}}$ with $p_2= -p_1$, $\alpha_1= \min\{\alpha_1,\alpha_2\}<0$ and $\alpha_1\neq \alpha_2$; then $\forall\; u\in H^1(S^2)$
$$\log\left(\frac{1}{4\pi}\int_{S^2} h e^{u-\ov{u}}dv_{g_0}\right)< \frac{1}{16\pi (1+\alpha_1)}\int_{S^2} |\nabla u|^2dv_{g_0}+ \alpha_2-\log(1+\alpha_1).$$
\end{teo}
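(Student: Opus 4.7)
The plan is to apply Theorem~\ref{teo 1} with $\ov\rho=1+\alpha_1$, once the absence of a minimizer for $J_{\ov\rho}$ has been established; the strict inequality in the conclusion is then exactly the statement that the infimum is not attained. Two pieces of work are needed: (i) ruling out solutions of \eqref{eq nonsing} with $\rho=\ov\rho=1+\alpha_1$, and (ii) an explicit simplification of the formula of Theorem~\ref{teo 1} for this geometry.

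\textbf{(i) Non-existence.} Any minimizer of $J_{\ov\rho}$ solves the Euler--Lagrange equation \eqref{eq nonsing} with $\rho=\ov\rho=1+\alpha_1$, and equivalently (through $v=u-4\pi\sum_i\alpha_i G_{p_i}$) the singular Liouville equation \eqref{eq sing} on $(S^2,g_0)$ with $K\equiv 1$ and singularities $\alpha_1,\alpha_2$ at the antipodal points $p_1,p_2$. I would extend the Kazdan--Warner-type identity already used to rule out the case $m=1$ of Theorem~\ref{una sing}. Testing \eqref{eq nonsing} against the first spherical harmonic $\phi(x)=\langle x,p_1\rangle$ and carefully carrying out the integration by parts near the two singular points, one obtains an identity linear in $\alpha_1,\alpha_2$. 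The antipodal symmetry $p_2=-p_1$, combined with $\phi(p_1)=-\phi(p_2)=1$, makes the two singular contributions enter with opposite signs, so the identity collapses to $(\alpha_1-\alpha_2)c=0$ for a nonzero constant $c$, contradicting $\alpha_1\neq\alpha_2$. (This is also why the symmetric case $\alpha_1=\alpha_2$ must be excluded from the statement.) I expect this to be the main obstacle, since the integration by parts must be justified in the presence of the log-singularities of the solution at $p_1,p_2$.

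\textbf{(ii) Computation.} Once non-existence is granted, the first formula of Theorem~\ref{teo 1} applies with $\alpha=\alpha_1$. Since $\alpha_1<\alpha_2$, only the index $i=1$ realizes the minimum, so the maximum reduces to a single term. On $(S^2,g_0)$ the Green's function admits the explicit form
\[
G_p(q)=-\frac{1}{2\pi}\log|p-q|+\frac{2\log 2-1}{4\pi}
\]
in terms of the Euclidean chord distance. By homogeneity $A(p)$ is the constant $\frac{2\log 2-1}{4\pi}$, giving $4\pi A(p_1)=2\log 2-1$; antipodality yields $|p_1-p_2|=2$ and hence $G_{p_2}(p_1)=-\frac{1}{4\pi}$, so that $-4\pi\alpha_2 G_{p_2}(p_1)=\alpha_2$. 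Substituting these together with $|\Sigma|=4\pi$ and $K(p_1)=1$ into the bracket of Theorem~\ref{teo 1}, the three constants $1$, $\log\tfrac{1}{4}$ and $2\log 2-1$ cancel, leaving
\[
\inf_{H^1(S^2)} J_{\ov\rho} = -8\pi(1+\alpha_1)\bigl(\alpha_2-\log(1+\alpha_1)\bigr).
\]
The same correspondence between $\inf J_{\ov\rho}$ and the sharp constant in Troyanov's inequality \eqref{troy} that was used for Theorem~\ref{una sing} then delivers the announced Onofri-type bound with constant $\alpha_2-\log(1+\alpha_1)$, strictly because the infimum is not attained.
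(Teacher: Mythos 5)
Your proposal is correct and is essentially the paper's own proof: the non-existence step is exactly the paper's Proposition \ref{prop poho} (the Kazdan--Warner identity in the $x_3$-direction for antipodal singular points, with the integration by parts near $p_1,p_2$ justified by the gradient estimates of Lemma \ref{lemma traccia}), which at $\rho=\ov{\rho}$ reduces to $(\alpha_2-\alpha_1)\bigl(1-\int_{S^2}he^{u}x_3\,dv_{g_0}\bigr)=0$ and is contradictory since $\int_{S^2}he^{u}x_3\,dv_{g_0}<\int_{S^2}he^{u}\,dv_{g_0}=1$, after which Theorem \ref{teo 1} together with \eqref{C e inf} and the same explicit constants $4\pi A(p)=2\log 2-1$, $-4\pi\alpha_2G_{p_2}(p_1)=\alpha_2$ gives the stated bound, strict because the infimum is not attained. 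Only two cosmetic points: $\ov{\rho}=8\pi(1+\alpha_1)$ rather than $1+\alpha_1$, and the collapse to ``$(\alpha_1-\alpha_2)c=0$ with $c\neq 0$'' is not a consequence of antipodality alone but of inserting the specific value $\rho=\ov{\rho}$ into the identity, the nonzero factor being precisely $c=1-\int_{S^2}he^{u}x_3\,dv_{g_0}>0$.
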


When $\alpha_1=\alpha_2<0$ theorem \ref{teo 1}  cannot be directly applied because (\ref{eq nonsing}) has solutions. However, it is possible to use a stereographic projection and a classification result in \cite{praj-tar} to find an explicit expression for the solutions. In particular  a direct computation allows to prove that all the solutions are minimum points of $J_{\ov{\rho}}$ and to find the value of $\dis{\min_{H^1(S^2)}J_{\ov{\rho}}}$. 

\begin{teo}\label{due sing uguali}
Assume $h=e^{-4\pi \alpha \left( G_{p_1}+G_{p_2}\right)}$ with $\alpha<0$ and $p_1=-p_2$; then $\forall\; u\in H^1(S^2)$ we have 
$$\log\left(\frac{1}{4\pi}\int_{S^2} h e^{u-\ov{u}}dv_{g_0}\right)\le \frac{1}{16\pi(1+\alpha)}\int_{S^2} |\nabla u|^2dv_{g_0}+ \alpha-\log(1+\alpha).$$
Moreover the following conditions are equivalent: 
\begin{itemize}
\item $u$ realises equality.
\item If $\pi$ denotes the stereographic projection from $p_1$ then
$$
u\circ \pi^{-1}(y)= 2\log\left( \frac{(1+|y|^2)^{1+\alpha}}{1+e^\lambda|y|^{2(1+\alpha)}} \right)+c
$$
for some $\lambda,c \in \R$.
\item $h e^{u} g_0$ is a metric with constant positive Gaussian curvature and conical singularities of order $\alpha_i$ in $p_i$, $i=1,2$.
\end{itemize} 
\end{teo}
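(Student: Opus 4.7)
The plan is to classify the critical points of $J_{\ov\rho}$ via a reduction to a singular Liouville equation on $\R^2$, evaluate the functional on the explicit family so obtained, and deduce minimality by comparison with the formula of Theorem \ref{teo 1}. First, let $\pi\colon S^2\setminus\{p_1\}\to\R^2$ be stereographic projection from $p_1$ (so that $p_2$ is sent to the origin), and set $w=u\circ\pi^{-1}$. Using the explicit Green function on $(S^2,g_0)$ and the antipodal symmetry one computes $h\circ\pi^{-1}(y)=C_\alpha|y|^{2\alpha}(1+|y|^2)^{-2\alpha}$ for an explicit constant $C_\alpha>0$. The Euler--Lagrange equation of $J_{\ov\rho}$, which is \eqref{eq nonsing} with $\rho=8\pi(1+\alpha)$, becomes --- after absorbing the conformal factor of the spherical metric into the unknown --- a singular Liouville equation
$$
-\Delta w=\kappa\,|y|^{2\alpha}e^{w},\qquad \int_{\R^2}|y|^{2\alpha}e^w\,dy<\infty,
$$
for a suitable $\kappa>0$. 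The Prajapat--Tarantello classification \cite{praj-tar} then asserts that every such solution is radial and of the form $w(y)=-2\log(1+\mu|y|^{2(1+\alpha)})+\text{const}$ with $\mu>0$, and undoing the conformal change yields the two-parameter family of the statement,
$$
u\circ\pi^{-1}(y)=2\log\!\left(\frac{(1+|y|^2)^{1+\alpha}}{1+e^\lambda|y|^{2(1+\alpha)}}\right)+c,\qquad \lambda,c\in\R.
$$

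Next I evaluate $J_{\ov\rho}$ on this family. The change of variable $s=|y|^{2(1+\alpha)}$ reduces each of $\int|\nabla u|^2$, $\int u\,dv_{g_0}$ and $\int he^u\,dv_{g_0}$ to an elementary Beta-function integral. A direct calculation shows that the resulting value of $J_{\ov\rho}$ is independent of $\lambda$ and $c$, which reflects the action on the family of both additive constants and the non-compact group of conformal diffeomorphisms of $S^2$ fixing $\{p_1,p_2\}$; after rearrangement, this common value is exactly the right-hand side of the desired inequality applied to $u-\ov u$, so the family realises equality.

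It remains to show that this common value equals $\inf_{H^1(S^2)}J_{\ov\rho}$. Suppose by contradiction that the infimum is not attained; then Theorem \ref{teo 1} applies and gives an explicit formula for $\inf J_{\ov\rho}$ in terms of the quantities $A(p_i)$ and $G_{p_j}(p_i)$. By the antipodal symmetry $A(p_1)=A(p_2)$ and $G_{p_1}(p_2)=G_{p_2}(p_1)$, together with $K\equiv 1$ and $|\Sigma|=4\pi$, this formula reduces precisely to the value computed in the previous paragraph. Consequently $\inf J_{\ov\rho}=J_{\ov\rho}(u_{\lambda,c})$ for every choice of parameters, contradicting non-attainment. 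Hence the infimum is attained, the inequality holds as claimed, and equality is realised exactly on the explicit family. The equivalence with the geometric condition then follows from the substitution $v:=u-4\pi\alpha(G_{p_1}+G_{p_2})$: we have $e^v=he^u$ up to a multiplicative constant, and $u$ satisfies the Euler--Lagrange equation of $J_{\ov\rho}$ if and only if $v$ solves \eqref{eq sing} with $\rho=8\pi(1+\alpha)$, $K\equiv 1$ and $\alpha_1=\alpha_2=\alpha$. By the geometric interpretation of \eqref{eq sing} recalled in the introduction, this is exactly the condition that $he^u g_0$ be a metric of constant positive Gaussian curvature with conical singularities of order $\alpha$ at $p_1$ and $p_2$.

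The main technical obstacle is the explicit integration in the second step and the identification of its outcome with the formula of Theorem \ref{teo 1}: both require careful manipulation of the Green-function quantities on the round sphere, and the fact that the value does not depend on the dilation parameter $\lambda$ is the crucial manifestation of the non-compact conformal invariance which is precisely what prevents Theorem \ref{teo 1} from applying directly in this case.
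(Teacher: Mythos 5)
Your proposal is correct and follows essentially the same route as the paper: reduce via stereographic projection to the singular Liouville equation on $\R^2$, invoke the Prajapat--Tarantello classification to identify all critical points as the family $u_{\lambda,c}$, evaluate $J_{\ov{\rho}}$ on that family, and rule out non-attainment by comparing with the blow-up value of Theorem \ref{teo 1}. The only difference is cosmetic: you compute $J_{\ov{\rho}}(u_{\lambda,c})$ by direct Beta-type integrals, while the paper first uses conformal invariance to reduce to $\lambda=1$, $c=0$ and then integrates by parts using the equation.
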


This is a generalization of Onofri's inequality \eqref{onofri} for metrics with two conical singularities.

\section{Preliminaries and blow up analysis}
Let $(\Sigma,g)$ be a smooth compact, connected, Riemannian surface and let $S:=\{p_1,\ldots,p_m\}$ be a finite subset of $\Sigma$. Let us consider a function $h$ satisfying \eqref{h} with $K\in C^\infty(\Sigma)$, $K>0$ and $\alpha_i\in (-1,+\infty)\bs\{0\}$. In order to distinguish the singular points of $h$ from the regular ones, we introduce  a singularity index function
$$
\beta(p):=\begin{Si}{cc}
\alpha_i & \mbox{if } p=p_i\\
0 & \mbox{if } p\notin S
\end{Si}.
$$
We will denote $\dis{\alpha:= \min_{p\in \Sigma} \beta (p)= \min\left\{\min_{1\le i\le m}\alpha_i,0\right\}}$ the minimum singularity order. We shall consider the functional
\begin{equation}\label{funzionale}
J_\rho(u)=\frac{1}{2}\int_{\Sigma} |\nabla_g u|^2 dv_g + \frac{\rho}{|\Sigma|} \int_{\Sigma} u\; dv_g -\rho\log\left(\frac{1}{|\Sigma|}\int_{\Sigma} he^{u}dv_g\right).
\end{equation}
Our goal is to give a sharp version of (\ref{troy}) finding the explicit value of 
\begin{equation}\label{C e inf}
C(\Sigma,g,h) = -\frac{1}{8\pi(1+\alpha)} \inf_{u\in H^1(\Sigma)} J_{8\pi(1+\alpha)} (u).
\end{equation}
To simplify the notations we will denote  $\ov{\rho}:=8\pi(1+\alpha)$, $\rho_\eps= \ov{\rho}-\eps$, $J_\eps:= J_{\rho_\eps}$ and  $J:= J_{\ov{\rho}}$. From (\ref{troy}) it follows that $\forall\; \eps >0$ the functional $J_\eps$ is coercive and, by direct methods, it is possible to find a function $u_\eps \in H^1(\Sigma)$ satisfying
\begin{equation}\label{minim}
J_\eps(u_\eps)= \inf_{u\in H^1(\Sigma)}J_\eps(u)
\end{equation}
and
\begin{equation}\label{eq desing}
-\Delta_g u_\eps = \rho_\eps \left( \frac{h e^{u_\eps}}{\int_{\Sigma} h e^{u_\eps}dv_g} -\frac{1}{|\Sigma|} \right).
\end{equation}

Since  $J_{\eps}$ is  invariant under addition of constants $\forall\; \eps >0$, we may also assume
\begin{equation}\label{norm}
\int_{\Sigma} h\; e^{u_\eps} dv_g = 1.
\end{equation}

\begin{oss}\label{oss reg}
$u_\eps\in C^{0,\gamma}(\Sigma)\cap W^{1,s}(\Sigma)$ for some $\gamma\in(0,1)$ and $s>2$.
\end{oss}
\begin{proof}
It is easy to see that $h\in L^q(\Sigma)$ for some $q >1$ ( $q=+\infty$  if $\alpha=0$ and $q< -\frac{1}{\alpha}$ for $\alpha <0)$. Applying locally Remarks 2 and 5 in \cite{BM} one can show that $u_\eps\in L^{\infty}(\Sigma)$ so $-\Delta{u_\eps}\in L^q(\Sigma)$ and by standard elliptic estimates $u_\eps \in W^{2,q}(\Sigma)$. Since $q>1$ the conclusion follows by Sobolev's embedding theorems. 
\end{proof}

The behaviour of $u_\eps$ is described by the following concentration-compactness result:

\begin{prop}\label{prop conc-comp}
Let $u_n$ be a sequence of solutions of 
$$
-\Delta_g u_n = V_n e^{u_n} -\psi_n
$$
where $\|\psi_n\|_{L^s(\Sigma)}\le C$ for some $s>1$, and $$V_n= K_n \prod_{1\le i\le m} e^{-4\pi \alpha_iG_{p_i}}$$ with  $K_n\in  C^\infty(\Sigma)$, $0<a\le K_n\le b$ and $\alpha_i >-1$, $i=1,\ldots,m$. Then there exists a subsequence $u_{n_k}$ of $u_n$ such that one of the following holds:
\begin{itemize}
\item[i.] $u_{n_k}$ is uniformly bounded in $L^\infty(\Sigma)$;
\item[ii.] $u_{n_k}\longrightarrow -\infty$ uniformly on $\Sigma$;
\item[iii.] there exist a finite blow-up set $B =\{q_1,\ldots,q_l\}\subseteq \Sigma$ and a corresponding family of sequences  $\{q^j_k\}_{k\in \mathbb{N}}$, $j=1,\ldots l$ such that $q_k^j\stackrel{k\to \infty}{\longrightarrow} q_j$ and $u_{n_k}(q_k^j)\stackrel{k\to \infty}{\longrightarrow} +\infty$ $j=1,\ldots,l$. Moreover  $u_{n_k}\stackrel{k\to \infty}{\longrightarrow} -\infty$ uniformly on compact subsets of $\Sigma \bs B $ and $V_{n_k}e^{u_{n_k}} \rightharpoonup \sum_{j=1}^l \beta_j \delta_{q_j}$ weakly in the sense of measures where $\beta_j = 8\pi(1+\beta(q_j))$ for $j=1,\ldots,l$.
\end{itemize}
\end{prop}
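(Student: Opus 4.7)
My plan is to combine Brezis--Merle estimates for equations with $L^1$ data (following \cite{BM}) with the blow-up analysis for singular Liouville equations developed in \cite{bar-tar} and \cite{BarLinTar}. First, integrating the equation over $\Sigma$ and applying H\"older's inequality to $\psi_n$ yields the uniform $L^1$ bound $\int_\Sigma V_n e^{u_n}\,dv_g \le C$, so that up to subsequences $V_n e^{u_n}\,dv_g\rightharpoonup \mu$ weakly--$\ast$ for some finite non-negative measure $\mu$. I then introduce the concentration set
$$
B:=\Big\{q\in \Sigma\,:\, \liminf_{r\to 0}\limsup_{n\to\infty}\int_{B_r(q)}V_n e^{u_n}\,dv_g\ge 4\pi(1+\beta(q))\Big\},
$$
which is finite because $\mu(\Sigma)<\infty$ and the thresholds $4\pi(1+\beta(q))$ are bounded below by $4\pi(1+\alpha)>0$.

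The central local step is a Brezis--Merle type estimate on $\Sigma\bs B$: for $q\notin B$, pick $r>0$ so that the local mass of $V_n e^{u_n}$ on $B_r(q)$ stays strictly below $4\pi(1+\beta(q))$; since $V_n$ belongs to $L^{q_0}_{loc}$ for some $q_0>1$ (as in Remark~\ref{oss reg}), the Brezis--Merle lemma yields $e^{u_n}$ bounded in $L^p_{loc}$, and elliptic regularity then bounds $u_n^+$ uniformly on a smaller ball. This already forces the trichotomy. When $B=\0$, $u_n\le C$ on $\Sigma$, so $V_n e^{u_n}$ is bounded in $L^{q_0}(\Sigma)$; writing $u_n=\ov{u_n}+w_n$ with $w_n$ mean-zero, standard $W^{2,q_0}$-estimates give $\|w_n\|_{L^\infty}\le C$, and alternative (i) or (ii) follows according to whether $\ov{u_n}$ stays bounded or diverges to $-\infty$ (divergence to $+\infty$ being excluded by the $L^1$ bound). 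When $B\ne\0$, the existence of the blow-up sequences $q_k^j\to q_j$ is automatic: otherwise $u_n$ would be locally bounded above near $q_j$ and, by the same Brezis--Merle argument, concentration at $q_j$ would be impossible.

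The two remaining points---that $u_n\to-\infty$ uniformly on compact subsets of $\Sigma\bs B$ and that $\mu(\{q_j\})=8\pi(1+\beta(q_j))$---are the delicate part. The first follows from the Green's representation
$$
u_n(x)=\ov{u_n}+\int_{\Sigma}G_g(x,y)\bigl(V_n e^{u_n}-\psi_n\bigr)(y)\,dv_g(y),
$$
combined with the $L^\infty_{loc}(\Sigma\bs B)$ bound on $u_n^+$: the concentrated part of $\mu$ at the $q_j$'s forces $\ov{u_n}\to-\infty$, and hence $u_n\to-\infty$ off $B$. The main obstacle is the mass identification. I would run the standard rescaling near each $q_j$: choose $x_n^j\to q_j$ maximizing $u_n$ on a small ball, set $\delta_n^j:=e^{-u_n(x_n^j)/(2(1+\beta(q_j)))}$, and in local isothermal coordinates consider $v_n(y):=u_n(\delta_n^j y+x_n^j)+2(1+\beta(q_j))\log\delta_n^j$. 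After isolating the logarithmic singularity of $G_{p_i}$ when $q_j=p_i$, $v_n$ converges to an entire solution of $-\Delta v=c\,|y|^{2\beta(q_j)}e^v$ on $\R^2$; the classifications of Chen--Li (regular case) and \cite{praj-tar} (singular case) pin the mass of any such solution to exactly $8\pi(1+\beta(q_j))$. The genuinely hard step is excluding secondary bubbles or bubble-towers clustering around $q_j$, which could inflate the mass; this is precisely the role of the sharp analyses in \cite{bar-tar} and \cite{BarLinTar}, which I would invoke directly.
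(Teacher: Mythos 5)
The paper itself does not prove this proposition: it is quoted from the literature (it points to \cite{Li} for the regular case and to \cite{bar-tar}, \cite{bar-mont} for the singular one). Your outline --- the soft concentration--compactness skeleton (uniform $L^1$ bound, finite concentration set, Brezis--Merle off the concentration set, Green's representation for the minus-infinity alternative) together with an explicit delegation of the hard quantization step (vanishing of the residual mass, exclusion of bubble towers, exact values $8\pi(1+\beta(q_j))$) to \cite{bar-tar} and the related blow-up papers --- is in substance the same route, and delegating that step is consistent with what the paper does; note only that for the positive-order singular points the paper credits \cite{bar-mont} alongside \cite{bar-tar}.

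There is, however, one concrete slip in the part you do carry out yourself: the smallness threshold $4\pi(1+\beta(q))$ in your definition of $B$ and in the Brezis--Merle step. Brezis--Merle gives, when the local mass is $\eps$, a bound on $e^{|z_n|}$ in $L^p$ with $p$ of order $4\pi/\eps$; to conclude that $V_n e^{u_n}$ is bounded in $L^t$ for some $t>1$ near a point of order $\beta(q)$ you need $p>\frac{1}{1+\beta(q)}$ when $\beta(q)<0$ (since $V_n\approx d(\cdot,q)^{2\beta(q)}$ lies only in $L^{q_0}$ for $q_0<\frac{1}{-\beta(q)}$), but $p>1$ when $\beta(q)\ge 0$, where the weight is bounded. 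Hence the usable threshold is $4\pi\min\{1,1+\beta(q)\}$, not $4\pi(1+\beta(q))$: at a singular point with $\alpha_i>0$ your threshold exceeds $4\pi$, and a point carrying local mass in $[4\pi,\,4\pi(1+\alpha_i))$ would be excluded from $B$ although the Brezis--Merle argument cannot show local boundedness there, so the trichotomy would not be established at that stage of your argument. The fix is harmless --- take the threshold $4\pi\min\{1,1+\beta(q)\}$ (the constant in the singular Brezis--Merle estimate of \cite{bar-tar}); $B$ remains finite and the rest of your scheme, including the final identification of the masses by the cited quantization results, goes through unchanged.
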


A proof of proposition \ref{prop conc-comp} in the regular case can be found in \cite{Li} while the general case is a consequence of the results in \cite{bar-tar} and \cite{bar-mont}.  In our analysis we will also need the following  local version of proposition \ref{prop conc-comp} proved by Li and Shafrir (\cite{Li-sha}):

\begin{prop}\label{LiS}
Let $\Omega$ be an open domain in $\R^2$ and $v_n$ be a sequence satisfying $\|e^{v_n}\|_{L^1(\Omega)}\le C$ and 
$$
-\Delta v_n = V_n e^{v_n}  
$$
where $0\le V_n\in C_0(\ov{\Omega})$ and $V_n\longrightarrow V$ uniformly in $\ov{\Omega}$. If $v_n$ is not uniformly bounded from above on compact subset of $\Omega$, then $V_n e^{v_n} \rightharpoonup  \dis{8\pi \sum_{i=1}^lm_j \delta_{q_j}}$ as measures, with $q_j\in \Omega$  and $m_j\in \mb{N}^+$, $j=1,\ldots,l$.
\end{prop}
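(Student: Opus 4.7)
The plan is to follow the classical Brezis-Merle-Li-Shafrir framework: first isolate blow-up points via a Brezis-Merle alternative, then perform an iterated rescaling analysis around each blow-up point, and combine the Chen-Li classification of entire Liouville solutions on $\R^2$ with a no-neck-mass-loss argument to quantize the limit measure by $8\pi$.

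First, since $V_n$ is uniformly bounded in $L^\infty(\Omega)$ (it converges uniformly to $V$) and $\|e^{v_n}\|_{L^1(\Omega)}\le C$, the Brezis-Merle alternative applies to $-\Delta v_n = V_n e^{v_n}$: either $v_n$ is locally bounded from above, or $v_n \longrightarrow -\infty$ uniformly on compact subsets of $\Omega \setminus B$ for some finite set $B=\{q_1,\dots,q_l\}\subset \Omega$, with $V_n e^{v_n} \rightharpoonup \sum_{j=1}^l \beta_j \delta_{q_j}$ as measures and $\beta_j\ge 4\pi$. The hypothesis rules out the first case, so the task reduces to showing $\beta_j\in 8\pi\,\mathbb{N}^+$. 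Fix one $q_j$ and a ball $B_r(q_j)$ containing no other blow-up point in order to localize the subsequent analysis.

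Second, extract the primary bubble. Pick $x_n$ with $M_n:=v_n(x_n)=\max_{\overline{B_{r/2}(q_j)}}v_n\to +\infty$, set $\mu_n:=e^{-M_n/2}$, and define $w_n(y):=v_n(x_n+\mu_n y)-M_n$. Then $w_n$ satisfies $-\Delta w_n = \tilde V_n(y) e^{w_n}$ on $B_{r/(2\mu_n)}(0)$ with $\tilde V_n(y):=V_n(x_n+\mu_n y)\to V(q_j)>0$ locally uniformly, $w_n(0)=0$, $w_n\le 0$, and $\int e^{w_n}$ uniformly bounded (since $\int V_n e^{v_n}$ is bounded and $V_n\ge c_0>0$ near $q_j$). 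Standard elliptic estimates yield a $C^2_{\mathrm{loc}}(\R^2)$-limit $w_\infty$ solving $-\Delta w_\infty = V(q_j) e^{w_\infty}$ with finite total mass, and the Chen-Li classification forces $\int_{\R^2} V(q_j) e^{w_\infty} = 8\pi$. Hence this first bubble accounts for exactly $8\pi$ of $\beta_j$.

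Third, iterate and control the necks. If $\beta_j>8\pi$, select a second concentration center $x_n^{(2)}$ as a maximizer of $v_n(x)+2\log|x-x_n^{(1)}|$ outside a ball $B_{R\mu_n^{(1)}}(x_n^{(1)})$ with $R$ large, rescale by $\mu_n^{(2)}:=e^{-v_n(x_n^{(2)})/2}$, and repeat the Chen-Li argument to extract another Liouville profile contributing another $8\pi$. Finiteness of $\beta_j$ forces termination after finitely many steps $m_j$. The heart of the argument, and the main obstacle, is the \emph{neck analysis}: one must show that the mass of $V_n e^{v_n}$ on the annular regions separating consecutive bubble scales tends to zero, so that no mass is lost between bubbles. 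I would treat this along Li-Shafrir's original lines, exploiting the selection-enforced sharp decay $v_n(x)+2\log\mathrm{dist}(x,\{x_n^{(k)}\})\le C$ on each neck together with a Pohozaev-type identity on annuli (or, equivalently, a direct comparison with the radial Liouville model $-\Delta u=e^u$) to bound the neck mass by a quantity that vanishes as the scales separate. Summing the $m_j$ bubble contributions then yields $\beta_j=8\pi m_j$ with $m_j\in \mathbb{N}^+$, completing the proof.
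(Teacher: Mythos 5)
You should first note that the paper itself does not prove this proposition: it is stated as a known result and attributed to Li and Shafrir \cite{Li-sha}, so there is no internal argument of the paper to compare yours against. What you have written is a reconstruction of the external Li--Shafrir proof, and at the level of architecture it is the right one: Brezis--Merle alternative to isolate a finite blow-up set and reduce to quantizing each local mass $\beta_j$, rescaling at the maximum and the Chen--Li classification to show each bubble carries exactly $8\pi$, iterated selection of further concentration points, and a neck analysis to rule out residual mass.

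Judged as a proof rather than a plan, however, there is a genuine gap, and you name it yourself: the neck analysis. The bound $v_n(x)+2\log\mathrm{dist}\bigl(x,\{x_n^{(k)}\}\bigr)\le C$ on the annular regions is not a formal consequence of the selection procedure; it is essentially the Brezis--Li--Shafrir sup$+$inf inequality, and passing from it (or from Pohozaev identities on annuli) to the vanishing of the neck mass is precisely the hard core of the theorem. As written, your argument establishes the easy half --- each selected bubble contributes at least $8\pi$, hence $\beta_j\ge 8\pi$ --- while the exact quantization $\beta_j=8\pi m_j$ is deferred to ``Li--Shafrir's original lines,'' i.e.\ to the very result being proved. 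Two smaller points: since only $0\le V_n$ is assumed, you cannot write $V_n\ge c_0>0$ near $q_j$; the uniform bound on $\int e^{w_n}$ comes instead directly from $\|e^{v_n}\|_{L^1(\Omega)}\le C$ after the change of variables, and the possibility $V(q_j)=0$ (where Chen--Li does not apply) must be excluded separately --- it is, because the rescaled limit would then be harmonic, nonpositive, vanishing at the origin, hence identically zero, contradicting $\int_{\R^2}e^{w_\infty}<\infty$ obtained by Fatou. Finally, the Brezis--Merle alternative for sequences not locally bounded above also includes the case $v_n\to-\infty$ uniformly on compact sets with no concentration at all, which should be mentioned and disposed of before reducing to the blow-up case.
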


Applying proposition \ref{prop conc-comp} to $u_\eps$ under the additional condition (\ref{norm}) we obtain that either $u_\eps$ is uniformly bounded in $L^\infty(\Sigma)$ or its blows-up set contains a single point $p$ such that $\beta(p)= \alpha$. In the first case, one can use elliptic estimates to find uniform bounds on $u_\eps$ in $W^{2,q}(\Sigma)$, for some $q>1$; consequently, a subsequence of $u_\eps$ converges in $H^{1}(\Sigma)$ to a function $u\in H^1(\Sigma)$ that is a minimum point of $J$ and a solution of (\ref{eq nonsing}) for $\rho=\ov{\rho}$. We now focus on the second case, that is 
\begin{equation}\label{blowup}
\lambda_\eps:= \max_{\Sigma}u_\eps = u_\eps (p_\eps) \ra +\infty \quad \mbox{ and } \quad p_\eps \ra p \quad \mbox{with} \quad \beta(p)=\alpha.
\end{equation}

By proposition \ref{prop conc-comp} we also get:

\begin{lemma}\label{lemma riass}
If $u_\eps$ satisfies (\ref{eq desing}), (\ref{norm}) and (\ref{blowup}), then, up to subsequences,
\begin{enumerate}[ref=\arabic*.]
\item\label{l1} $\rho_\eps h e^{u_\eps} \rightharpoonup \ov{\rho}\; \delta_p$;
\item\label{l2} $u_\eps \stackrel{\eps\to 0}{\longrightarrow}-\infty$ uniformly in $\Omega$, $\forall\;\Omega \subset \subset \Sigma\bs \{p\}$;
\item\label{l3}  $\ov{u}_\eps \stackrel{\eps\to 0}{\longrightarrow}-\infty$;
\item\label{l4} There exist $\gamma\in (0,1)$, $s>2$ such that  $u_\eps-\ov{u_\eps} \stackrel{\eps\to 0}{\longrightarrow} \ov{\rho}\;G_p$  in $C^{0,\gamma}(\ov{\Omega})\cap W^{1,s}(\Omega)$ $\forall$ $\Omega \subset \subset \Sigma\bs\{p\}$;
\item\label{l5} $\nabla u_\eps$ is bounded in $L^q(\Sigma)$ $\forall\; q\in (1,2)$.
\end{enumerate}
\end{lemma}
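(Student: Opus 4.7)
The plan is to derive items \ref{l1}--\ref{l5} by applying Proposition \ref{prop conc-comp} to the sequence $u_\eps$ and combining it with regularity estimates. Using (\ref{norm}), equation (\ref{eq desing}) becomes $-\Delta_g u_\eps = \rho_\eps h e^{u_\eps} - \rho_\eps/|\Sigma|$, which fits the framework of Proposition \ref{prop conc-comp} with $K_n = \rho_\eps K$ (uniformly bounded between positive constants) and $\psi_n = \rho_\eps/|\Sigma|$ uniformly bounded in $L^\infty$. Alternative (i) of that proposition is excluded because $\max_\Sigma u_\eps = \lambda_\eps \to +\infty$ by (\ref{blowup}); alternative (ii) is excluded by (\ref{norm}), since $u_\eps \to -\infty$ uniformly would contradict $\int_\Sigma h e^{u_\eps} dv_g = 1$. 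Hence a finite blow-up set $B = \{q_1, \ldots, q_l\}$ exists with $\rho_\eps h e^{u_\eps} \rightharpoonup \sum_{j=1}^l 8\pi(1+\beta(q_j))\delta_{q_j}$. By (\ref{blowup}), $p \in B$ and $\beta(p) = \alpha$; since each weight is at least $\ov{\rho} = 8\pi(1+\alpha)$ and the total mass equals $\rho_\eps \to \ov{\rho}$, conservation of mass forces $l=1$ and $q_1 = p$, giving items \ref{l1} and \ref{l2}.

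For item \ref{l5}, set $v_\eps := u_\eps - \ov{u}_\eps$; this has zero mean and solves $-\Delta_g v_\eps = \rho_\eps h e^{u_\eps} - \rho_\eps/|\Sigma|$ with right-hand side uniformly bounded in $L^1(\Sigma)$ by (\ref{norm}). Stampacchia-type estimates for the Poisson equation with $L^1$ data on $(\Sigma,g)$ then yield $\|v_\eps\|_{W^{1,q}(\Sigma)} \le C_q$ for every $q \in (1,2)$, and since $\nabla u_\eps = \nabla v_\eps$, the claim follows. For item \ref{l4}, fix $\Omega \subset\subset \Omega' \subset\subset \Sigma \bs \{p\}$; since $h \in L^q(\Sigma)$ for some $q>1$ (any $q < -1/\alpha$ when $\alpha<0$, any $q$ when $\alpha = 0$) and $u_\eps \to -\infty$ uniformly on $\ov{\Omega'}$ by item \ref{l2}, one has $\rho_\eps h e^{u_\eps} \to 0$ in $L^q(\Omega')$. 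Combined with interior elliptic estimates and the $W^{1,q}(\Sigma)$ bound from item \ref{l5}, this makes $v_\eps$ uniformly bounded in $W^{2,q}(\Omega)$, hence compactly in $C^{0,\gamma}(\ov{\Omega}) \cap W^{1,s}(\Omega)$ for some $\gamma \in (0,1)$ and $s>2$ via Sobolev embedding. Any weak limit $v$ in $W^{1,q}(\Sigma)$ satisfies $-\Delta_g v = \ov{\rho}(\delta_p - 1/|\Sigma|)$ distributionally with $\int_\Sigma v\, dv_g = 0$ by items \ref{l1} and the zero-mean condition; hence $v = \ov{\rho}\, G_p$, and uniqueness of the limit promotes convergence to the full sequence.

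Item \ref{l3} then drops out: by item \ref{l4} $v_\eps$ is uniformly bounded on any $\ov{\Omega} \subset\subset \Sigma \bs \{p\}$, while by item \ref{l2} $u_\eps \to -\infty$ uniformly on $\Omega$, so $\ov{u}_\eps = u_\eps - v_\eps \to -\infty$. The main obstacle of the argument is the reduction of the blow-up set to the single point $p$: it crucially exploits that $\ov{\rho} = 8\pi(1+\alpha)$ is the minimum of the quantized masses $8\pi(1+\beta(q))$, which is precisely why mass conservation forbids further bubbles. The subsequent regularity is standard, but care is needed to control the mild singularities of $h$ at the points $p_i \neq p$ via the $L^q$-integrability of $h$.
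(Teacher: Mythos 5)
Your proof is correct, and while items \ref{l1}--\ref{l2} follow the same path as the paper (excluding alternatives (i) and (ii) of Proposition \ref{prop conc-comp} via \eqref{blowup} and \eqref{norm}, then using that every atom of the limit measure carries mass $8\pi(1+\beta(q_j))\ge \ov{\rho}$ while the total mass is $\lim\rho_\eps=\ov{\rho}$, so the blow-up set reduces to the single point $p$ --- exactly the reduction the paper performs in the discussion preceding the lemma), your treatment of \ref{l3}--\ref{l5} takes a genuinely different route. For \ref{l4} the paper works directly from Green's representation formula for $u_\eps-\ov{u}_\eps$, splitting the integral with a cut-off near $p$, using \ref{l1}--\ref{l2} on the two pieces and the Hardy--Littlewood--Sobolev inequality for the gradient term, which gives the $C^{0,\gamma}\cap W^{1,s}$ convergence without any prior global gradient bound; you instead prove \ref{l5} first by citing the Stampacchia/Brezis--Strauss $W^{1,q}$ estimate for $L^1$ data (the paper proves precisely this bound by hand, via the duality argument with zero-mean test functions $\ph$ satisfying $\|\nabla_g\ph\|_{L^{q'}}\le 1$, hence $\|\ph\|_\infty\le C_q$), then upgrade regularity locally away from $p$ by interior $W^{2,q}$ estimates, using that $h e^{u_\eps}\to 0$ in $L^q$ there by \ref{l2} and $h\in L^q$, and finally identify the limit as $\ov{\rho}\,G_p$ through uniqueness of the zero-mean distributional solution of $-\Delta_g v=\ov{\rho}\,(\delta_p-1/|\Sigma|)$. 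Your ordering also differs: \ref{l3} is deduced from \ref{l2} together with the local boundedness of $u_\eps-\ov{u}_\eps$ coming from \ref{l4}/\ref{l5}, whereas the paper presents it as a direct consequence of Proposition \ref{prop conc-comp}; your chain \ref{l2}, \ref{l5}, \ref{l4}, \ref{l3} is free of circularity. The trade-off: the paper's kernel-level argument is self-contained and quantitative, while yours is more modular and standard-elliptic in flavor, at the price of invoking the $L^1$-data elliptic theory and an abstract uniqueness step to identify the limit.
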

\begin{proof}
\ref{l1}, \ref{l2} and \ref{l3} are direct consequences of proposition \ref{prop conc-comp}. To prove \ref{l4} we consider  Green's representation formula
$$
u_\eps(x)-\ov{u}_\eps =\rho_\eps \int_{\Sigma} G_x(y) h(y)e^{u_\eps(y)} dv_g(y).
$$
We stress that Green's function has the following properties:
\begin{itemize}
\item $\dis{|G_x(y)|\le C_1 (1+ |\log d(x,y)|)}$ $\forall\; x,y \in \Sigma$, $x\neq y$.
\item $\dis{|\nabla^x_g G_x(y)|}\le \frac{C_2}{d(x,y)}$ $\forall \; x,y\in \Sigma$, $x \neq y$. 
\item $G_x(y)= G_y(x)$ $\forall\; x,y\in \Sigma$, $x\neq y$.
\end{itemize}
Take $q>1$ such that $h\in L^q(\Sigma)$. The first property also yields
\begin{equation}\label{norma p' G}
\sup_{x\in \Sigma} \|G_x\|_{L^{q'}(\Sigma)}  \le C_3.
\end{equation}
Let us fix $\delta>0$ such that $B_{3\delta}(p)\subset \Sigma\bs \Omega$ and take a cut-off function $\ph$ such that $\ph \equiv 1$ in $B_{\delta}(p)$ and $\ph \equiv 0$ in $\Sigma\bs B_{2\delta}(p)$.
$$
u_\eps(x)-\ov{u_\eps} = \rho_\eps \int_{\Sigma} \ph(y) G_x(y) h(y)e^{u_\eps(y)} dv_g(y) +\rho_\eps\int_{\Sigma} (1-\ph(y)) G_x(y) h(y)e^{u_\eps(y)} dv_g(y).
$$
By \eqref{norma p' G} and \ref{l2} we have
$$
\left|\int_{\Sigma} (1-\ph(y)) G_x(y) {h}(y)e^{u_\eps(y)} dv_g(y)\right|\le \int_{\Sigma\bs B_\delta(p)}\left|G_x(y)\right|{h}(y)e^{u_\eps(y)} dv_g(y)\le$$
$$
\le  C_3  \|h\|_{L^q(\Sigma)} \|e^{u_\eps}\|_{L^\infty (\Sigma \bs B_\delta(p))} \stackrel{\eps\to 0}{\longrightarrow} 0. 
$$
By \ref{l1} and the smoothness of $\ph G_x$ for $x\in \ov{ \Omega}$ and $y\in \Sigma$ we get
$$ 
\int_{\Sigma} \ph(y) G_x(y) {h}(y)e^{u_\eps(y)} dv_g(y)   \stackrel{\eps\to 0}{\longrightarrow} \ph(p)G_x(p)=G_p(x)
$$
uniformly for $x \in \Omega$.
Similarly we have
$$
\nabla_gu_\eps(x)= \rho_\eps \int_{\Sigma} \ph(y) \nabla_g^x G_x(y) {h}(y)e^{u_\eps(y)} dv_g(y) +\rho_\eps\int_{\Sigma} (1-\ph(y)) \nabla_g^x G_x(y) {h}(y)e^{u_\eps(y)} dv_g(y)
$$
with
$$ 
\int_{\Sigma} \ph(y) \nabla^x_g G_x(y) {h}(y)e^{u_\eps(y)} dv_g(y) \stackrel{k\to \infty}{\longrightarrow} \nabla^x_g G_p(x)
$$
uniformly in $\Omega$ and, assuming $q\in (1,2)$, by the Hardy-Littlewood-Sobolev inequality
$$
\int_{\Sigma} \left(\int_{\Sigma} (1-\ph(y)) \nabla_g^x G_x(y) {h}(y)e^{u_\eps(y)} dv_g(y)\right)^s dv_g(x) \le $$
$$
\le  C_2^s \int_{\Sigma} \left(\int_{\Sigma\bs B_{\delta}(p)} \frac{{\;h}(y)e^{u_\eps(y)\;}}{d(x,y)} dv_g(y)\right)^s\; dv_g(x) \le  C  \|h\|_{L^q(\Sigma)}^s \|e^{u_n}\|_{L^\infty (\Sigma \bs B_\delta(p))}^s \stackrel{\eps\to 0}{\longrightarrow} 0
$$
where $$
\frac{1}{s} = \frac{1}{q} - \frac{1}{2}.
$$
Note that $q>1$ implies $s>2$. 
Finally, to prove \ref{l5}, we shall observe that for any $1<q<2$ there exists a positive constant $C_q$ such that 
$$
\int_{\Sigma}\ph \; dv_g=0 \quad \mbox{ and }\quad\int_{\Sigma} |\nabla_g \ph|^{q'}dv_g\le 1 \quad \Longrightarrow \quad \|\ph\|_{\infty} \le C_q.
$$
Hence $\forall\; \ph \in W^{1,q'}(\Sigma)$
$$
\int_\Sigma \nabla_g u_\eps \cdot \nabla_g \ph\; dv_g = -\int_\Sigma \Delta u_\eps \ph  \; dv_g \le C_q  \|\Delta u_\eps\|_{L^1(\Sigma) }\le \tilde{C}_q$$
so that
$$
\|\nabla u_\eps \|_{L^q}\le \sup\left\{ \int_{\Sigma}\nabla_g u_\eps \cdot \nabla_g \ph\; dv_g \;:\; \ph \in W^{1,q'}(\Sigma), \|\nabla \ph\|_{L^{q'}}\le1  \right\} \le \tilde{C}_q.
$$
\end{proof}

We now focus on the behaviour of $u_\eps$ near the blow-up point. First we consider the case $\alpha<0$. Let us fix a system of normal coordinates in a small ball $B_\delta(p)$, with $p$ corresponding to $0$ and $p_\eps$ corresponding to $x_\eps$. We define 
\begin{equation}\label{riscalamento}
\ph_\eps (x):= u_\eps(t_\eps x) -\lambda _\eps,\quad t_\eps:=e^{-\frac{\lambda_\eps}{2(1+\alpha)}}.
\end{equation}

\begin{lemma}
If $\alpha<0$, 
$\dis{
\frac{|x_\eps|}{t_\eps}} 
$ is bounded.
\end{lemma}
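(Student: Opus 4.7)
The plan is a proof by contradiction. Suppose along a subsequence $|x_\eps|/t_\eps \to +\infty$. The idea is to perform a \emph{second} rescaling of $u_\eps$ centred at the blow-up point $p_\eps$ rather than at the singular point $p$, with the scale tuned to the local size of $h$ near $p_\eps$: by \eqref{dist}, $h(p_\eps)\approx |x_\eps|^{2\alpha}$. Heuristically, this secondary rescaling should produce a standard (regular) Liouville bubble of mass $8\pi$; comparing this with the total concentration mass $\ov{\rho} = 8\pi(1+\alpha) < 8\pi$ detected by Lemma \ref{lemma riass} then yields the contradiction.

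Concretely, I would introduce
$$
s_\eps := \bigl(\rho_\eps\, h(p_\eps)\, e^{\lambda_\eps}\bigr)^{-1/2},\qquad v_\eps(y) := u_\eps(p_\eps + s_\eps y) - \lambda_\eps,
$$
so that $v_\eps(0)=0=\max v_\eps$ and \eqref{eq desing} rescales to
$$
-\Delta v_\eps(y) = \frac{h(p_\eps + s_\eps y)}{h(p_\eps)}\, e^{v_\eps(y)} - \frac{s_\eps^2\, \rho_\eps}{|\Sigma|}.
$$
The first key step is $s_\eps/|x_\eps| \to 0$. Using $h(p_\eps)\approx |x_\eps|^{2\alpha}$, one has $s_\eps\approx |x_\eps|^{-\alpha} e^{-\lambda_\eps/2}$, hence $s_\eps/|x_\eps| \approx |x_\eps|^{-(1+\alpha)} e^{-\lambda_\eps/2}$. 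Since $1+\alpha>0$, the contradiction assumption $|x_\eps|/t_\eps \to +\infty$ becomes, after raising to the power $1+\alpha$, $|x_\eps|^{1+\alpha}e^{\lambda_\eps/2}\to +\infty$, which is precisely $s_\eps/|x_\eps| \to 0$. This in turn guarantees that on any ball $\{|y|\le R\}$ the quotient $h(p_\eps + s_\eps y)/h(p_\eps)$ tends to $1$ uniformly, since $p_\eps + s_\eps y$ remains at distance comparable to $|x_\eps|$ from $p$ and \eqref{dist} applies.

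To pass to the limit: since $v_\eps\le 0$, the right-hand side of the rescaled equation is bounded in $L^\infty_{\mathrm{loc}}(\R^2)$, and combined with $v_\eps(0)=0$ a Harnack-type argument yields local uniform bounds on $v_\eps$. Elliptic regularity then gives, up to a subsequence, $v_\eps \to v$ in $C^1_{\mathrm{loc}}(\R^2)$ with $-\Delta v = e^v$ on $\R^2$, $v\le 0$, $v(0)=0$ and $\int_{\R^2} e^v<+\infty$; the Chen-Li classification forces $\int_{\R^2} e^v = 8\pi$. Converting back via the change of variable,
$$
\int_{B_R(0)} \frac{h(p_\eps + s_\eps y)}{h(p_\eps)}\, e^{v_\eps}\,dy \;=\; \rho_\eps \int_{B_{R s_\eps}(p_\eps)} h\, e^{u_\eps}\,dv_g \;\le\; \rho_\eps \int_{\Sigma} h\, e^{u_\eps}\,dv_g \;=\; \rho_\eps,
$$
so letting $\eps \to 0$ and then $R \to +\infty$ yields $8\pi \le \ov{\rho} = 8\pi(1+\alpha)$, contradicting $\alpha<0$.

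The main obstacle is to justify the two local approximations rigorously: controlling $h(p_\eps + s_\eps y)/h(p_\eps)$ uniformly on compact sets requires both the precise asymptotic \eqref{dist} and the fact that $s_\eps \ll |x_\eps|$, while extracting a Chen-Li-type limit for $v_\eps$ requires enough compactness (Harnack plus elliptic estimates), supplied by the upper bound $v_\eps\le 0$ inherent in the definition of $\lambda_\eps$.
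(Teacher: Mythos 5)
Your argument is correct, and it takes a genuinely different route from the paper. You blow up at the maximum point $p_\eps$ at the scale $s_\eps$ dictated by $h(p_\eps)e^{\lambda_\eps}$, use the contradiction hypothesis only to get $s_\eps/|x_\eps|=(t_\eps/|x_\eps|)^{1+\alpha}\to 0$ so that the weight is asymptotically constant on that scale, and then invoke the Chen--Li classification of $-\Delta v=e^v$ with finite mass to force a bubble of mass $8\pi$, against the available mass $\ov{\rho}=8\pi(1+\alpha)<8\pi$. The paper instead rescales around the singular point $p$ by the factor $|x_\eps|$, so that under the contradiction hypothesis the rescaled sequence $\psi_\eps$ blows up at a point of the unit circle, away from the origin where the weight $|x|^{2\alpha}$ is singular, and then applies the Li--Shafrir local quantization (Proposition \ref{LiS}) to get $\liminf\int V_\eps e^{\psi_\eps}\ge 8\pi$ near that point, contradicting the same bound $\rho_\eps\le 8\pi(1+\alpha)<8\pi$. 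The two proofs hinge on the identical mass comparison; the paper's version avoids your compactness step (Harnack plus elliptic estimates and passage to an entire solution), since Li--Shafrir delivers the quantized concentration directly, while yours produces an explicit limiting bubble and only needs the classical regular classification. One presentational caveat: the uniform convergence $h(p_\eps+s_\eps y)/h(p_\eps)\to 1$ on compact sets does not follow from the two-sided comparability in \eqref{dist} alone; you need the multiplicative structure $h=|x|^{2\alpha}\tilde h$ with $\tilde h$ continuous and positive near $p$, which comes from \eqref{h} together with the expansion \eqref{esp green} of $G_p$ (exactly the computation the paper performs inside its own proof), and with that ingredient your limit equation and the subsequent contradiction are fully justified.
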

\begin{proof}
We define 
$$
\psi_\eps(x) = u_\eps(|x_\eps| x)+2(1+\alpha)\log |x_\eps| +s_\eps(|x_\eps|x)
$$
where $s_\eps(x)$ is the solution of
$$
\begin{Si}{cc}
-\Delta s_\eps = \frac{\rho_\eps}{|\Sigma|}  & \mbox{ in } B_\delta(0) \\
s_\eps =0 & \mbox{ if }  |x|=\delta
\end{Si}.
$$
The function $\psi_\eps$ satisfies 
$$
-\Delta \psi_\eps = |x_\eps|^{-2\alpha} \rho_\eps  h(|x_\eps| x) e^{-s_\eps(|x_\eps|x)} e^{\psi_\eps} = V_\eps e^{\psi_\eps}
$$
in $B_\frac{\delta}{|x_\eps|}(0)$.
We stress that, by standard elliptic estimates, $s_\eps$ is uniformly bounded in $C^1(\ov{B_\delta})$ and that  $G_p$ has the expansion
\begin{equation}\label{esp green}
G_p(x) = -\frac{1}{2\pi} \log |x|+ A(p) + O(|x|)
\end{equation}
in $B_\delta(0)$.  Thus
$$
|x_\eps|^{-2\alpha} h(|x_\eps| x)e^{-s_\eps(|x_\eps|x)} = |x_\eps|^{-2\alpha } e^{2\alpha \log (|x_\eps| |x|)-4\pi\alpha A(p)+O(|x_\eps| |x|)} e^{-s_\eps(|x_\eps|x)}  K(|x_\eps| x) \prod_{1\le i\le m,p_i\neq p} e^{-4\pi \alpha_i G_{p_i}(|x_\eps| x)}=$$
$$
= |x|^{2\alpha} e^{-4\pi \alpha A(p)} e^{O(|x_\eps| |x|)}e^{-s_\eps(|x_\eps|x)}  K(|x_\eps| x) \prod_{1\le i\le m,p_i\neq p} e^{-4\pi\alpha_i G_{p_i}(|x_\eps| x)}= |x|^{2\alpha} \tilde{h}(|x_\eps| x)
$$ 
where $\tilde{h}\in C^{1}(\ov{B_{\delta}})$. In particular $V_\eps$ is uniformly bounded in $C^1_{loc}(\R^2\bs\{0\})$. If there existed a subsequence such that $\dis{
\frac{|x_\eps|}{t_\eps}}\ra +\infty 
$
then 
$$\psi_\eps\left(\frac{x_\eps}{|x_\eps|}\right)= 2(1+\alpha)\log\left(\frac{|x_\eps|}{t_\eps}\right)+s_\eps(x_\eps)\ra +\infty,$$
so $\dis{y_0:= \lim_{\eps\to 0} \frac{x_\eps}{|x_\eps|}}$ would be a blow up point for $\psi_\eps$.  Since $y_0\neq 0$, applying proposition \ref{LiS} to $\psi_\eps$ in a small ball $B_r(y_0)$ we would get
$$
\liminf_{\eps \to 0}  \int_{B_r(y_0)} V_\eps e^{\psi_\eps} dx \ge 8\pi.
$$
But this would be in contradiction to (\ref{norm}) since 
$$
 \int_{B_r(y_0)} V_\eps e^{\psi_\eps} dx = \int_{B_{r(y_0)}}\rho_\eps\;|x_\eps|^{-2\alpha} h(|x_\eps| x) e^{-s_\eps(|x_\eps|x)} e^{\psi_\eps }dx  \le \rho_\eps \int_{B_\delta(p)} h e^{u_\eps} dv_g \le 8\pi(1+\alpha)<8\pi.
$$
\end{proof}

\begin{lemma}\label{lemma risc}
Assume $\alpha<0$. Then, possibly passing to a subsequence, $\ph_\eps$ converges uniformly on compact subsets of $\R^2$ and in $H^1_{loc}(\R^2)$ to 
$$
\ph_0(x):= -2\log\left(1+\frac{\pi c(p) }{1+\alpha}|x|^{2(1+\alpha)}\right)
$$ 
where  $\dis{c(p)= K(p)e^{-4\pi \alpha A(p)} \prod_{1\le i\le m, p_i\neq p} e^{-4\pi \alpha_i G_{p_i}(p)}}$. 
\end{lemma}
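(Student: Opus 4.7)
The plan is to pass to the limit in a suitably rescaled version of \eqref{eq desing} on balls of growing radius around the blow-up point, and then to invoke the classification of entire solutions of the singular Liouville equation. I work in the normal coordinates already introduced on $B_\delta(p)$, with $p\leftrightarrow 0$, and, combining the expansion \eqref{esp green} of every Green's function appearing in $h$, write $h(y)=|y|^{2\alpha}\tilde h(y)$ with $\tilde h\in C^0(\ov{B_\delta})$ and $\tilde h(0)=c(p)$ (this is exactly the factorisation computed in the previous lemma). Substituting $y=t_\eps x$ in \eqref{eq desing} and using the algebraic identity $t_\eps^{2+2\alpha}e^{\lambda_\eps}=1$ (which motivates the choice of $t_\eps$) reduces the equation to
$$-\Delta_{g_\eps}\ph_\eps=\rho_\eps\,|x|^{2\alpha}\,\tilde h(t_\eps x)\,e^{\ph_\eps}-\frac{\rho_\eps t_\eps^2}{|\Sigma|},\qquad x\in B_{\delta/t_\eps}(0),$$
where $g_\eps(x):=g(t_\eps x)$ converges to the Euclidean metric locally uniformly, the additive term vanishes, and $\tilde h(t_\eps\cdot)\to c(p)$ uniformly on compact sets.

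Next I would establish compactness for $\ph_\eps$. By construction $\ph_\eps\le 0$, with equality at $y_\eps:=x_\eps/t_\eps$, which is bounded by the preceding lemma; the change of variables $y=t_\eps x$ in \eqref{norm} yields $\int_{B_R(0)}|x|^{2\alpha}\tilde h(t_\eps x)e^{\ph_\eps}\,dx\le 1$ for every $R>0$. Since $\alpha\in(-1,0)$, the weight $|x|^{2\alpha}$ lies in $L^q_{loc}(\R^2)$ for some $q>1$, so combining with the pointwise bound $e^{\ph_\eps}\le 1$ gives a uniform $L^q_{loc}$ bound on the right-hand side. Standard $W^{2,q}$ elliptic estimates then furnish uniform local $C^{0,\gamma}$ control, so after passing to a subsequence I may assume $y_\eps\to y_0$ and $\ph_\eps\to \ph_0$ in $C^0_{loc}(\R^2)\cap H^1_{loc}(\R^2)$. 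The limit $\ph_0\le \ph_0(y_0)=0$ satisfies
$$-\Delta \ph_0=\ov{\rho}\, c(p)\,|x|^{2\alpha}\,e^{\ph_0}\qquad \text{on } \R^2.$$

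Finally, I would identify $\ph_0$. A concentration-compactness argument based on Proposition \ref{prop conc-comp} and item \ref{l1} of Lemma \ref{lemma riass} ensures that no mass escapes to infinity under the rescaling, upgrading the a priori inequality $\int_{\R^2}c(p)|x|^{2\alpha}e^{\ph_0}\,dx\le 1$ to equality. The classification result of Prajapat and Tarantello \cite{praj-tar}, applicable because $\alpha\in(-1,0)$ is not a nonnegative integer, then forces every entire solution with finite singular mass to be radial about $0$ and to take the form $\ph_0(x)=-2\log\bigl(1+b|x|^{2(1+\alpha)}\bigr)$ once the normalization $\max\ph_0=0$ is imposed; in particular $y_0=0$. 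Plugging this Ansatz into the mass constraint and evaluating the integral via the substitution $u=b r^{2(1+\alpha)}$ pins down $b=\pi c(p)/(1+\alpha)$, as claimed. The step I expect to require the most care is the upgrade from $\le 1$ to $=1$: it amounts to ruling out that Liouville mass leaks off to infinity along the rescaling, which would otherwise correspond (via Proposition \ref{LiS} applied to a diverging sequence of rescaled points) to a secondary blow-up point on $\Sigma$, in contradiction with the single-point concentration in Lemma \ref{lemma riass}.
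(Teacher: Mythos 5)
Your overall strategy (rescale, identify the limiting singular Liouville equation with coefficient $\ov{\rho}\,c(p)|x|^{2\alpha}$, classify via \cite{praj-tar}) is the paper's, but two steps are not sound as written. First, the compactness step: passing from the uniform $L^q_{loc}$ bound on $-\Delta\ph_\eps$ to ``uniform local $C^{0,\gamma}$ control'' by ``standard $W^{2,q}$ elliptic estimates'' fails, because interior $W^{2,q}$ estimates bound $\ph_\eps$ in terms of $\|\Delta\ph_\eps\|_{L^q}$ \emph{plus} a norm of $\ph_\eps$ itself, and a priori $\ph_\eps$ is only bounded above: it could drift to $-\infty$ or carry a large harmonic part without affecting your bound on the right-hand side. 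The facts you list ($\ph_\eps\le 0$ with value $0$ at the bounded points $x_\eps/t_\eps$) are exactly what rescues the argument, but they must be used: the paper decomposes $\ph_\eps=\psi_\eps+\xi_\eps$, where $\psi_\eps$ solves the Dirichlet problem on $B_R(0)$ with the same right-hand side (hence is bounded in $W^{2,q}$), while $\xi_\eps$ is harmonic, bounded above, and bounded below at $x_\eps/t_\eps$; the Harnack inequality then gives uniform $C^2$ bounds for $\xi_\eps$ on $B_{R/2}$, and only after that does one get local $W^{2,q}$ bounds for $\ph_\eps$ and a diagonal subsequence. This is the real content of the local convergence, and your proposal skips it.

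Second, the way you fix the constant $b$ is based on a flawed argument. You pin $b$ down through the identity $\int_{\R^2}c(p)|x|^{2\alpha}e^{\ph_0}\,dx=1$, and to upgrade the Fatou inequality $\le 1$ to equality you claim that mass escaping to infinity in the rescaled variable would produce, via Proposition \ref{LiS}, a secondary blow-up point on $\Sigma$, contradicting Lemma \ref{lemma riass}. That implication is false: mass lost at infinity in the variable $x=y/t_\eps$ can sit at intermediate scales $t_\eps\ll r\ll 1$ around the \emph{same} point $p$, so on $\Sigma$ it still concentrates at $p$ and creates no new blow-up point; excluding such ``neck'' mass is a genuinely harder statement which your sketch does not prove. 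Fortunately it is not needed: the classification in \cite{praj-tar} requires only $\int_{\R^2}|x|^{2\alpha}e^{\ph_0}\,dx<\infty$, which you already have, and it yields the one-parameter family $\ph_0(x)=-2\log\bigl(1+\tfrac{\pi e^{\lambda}c(p)}{1+\alpha}|x|^{2(1+\alpha)}\bigr)+\lambda$ for the limiting equation. The normalization $\max_{\R^2}\ph_0=0$, attained at the limit of the bounded points $x_\eps/t_\eps$ thanks to the locally uniform convergence, forces $\lambda=0$ (and $x_\eps/t_\eps\to0$), so $b=\pi c(p)/(1+\alpha)$ follows directly from the coefficient $\ov{\rho}\,c(p)$ in the equation, with no mass identity required; this is how the paper concludes.
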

\begin{proof}
The function $\ph_\eps$ is defined in $B_\eps=B_\frac{\delta}{t_\eps}(0)$ and satisfies
$$
-\Delta \ph_\eps = t_\eps^2 \rho_\eps \left(h(t_\eps x) e^{\ph_\eps} e^{\lambda_\eps}- \frac{1}{|\Sigma|}\right)= t_\eps^{-2\alpha} \rho_\eps  h(t_\eps x) e^{\ph_\eps}- \frac{t_\eps^2 \rho_\eps}{|\Sigma|}
$$
and
\begin{equation*}
t_\eps^{-2\alpha}\int_{B_\frac{\delta}{t_\eps}} h(t_\eps x) e^{\ph_\eps} \le 1.
\end{equation*}
As in the previous proof we have
$$
t_\eps^{-2\alpha} h(t_\eps x) = t_\eps^{-2\alpha } e^{2\alpha \log (t_\eps |x|)-4\pi\alpha A(p)+O(t_\eps |x|)} K(t_\eps x) \prod_{1\le i\le m,p_i\neq p} e^{-4\pi \alpha_i G_{p_i}(t_\eps x)} =$$
$$
= |x|^{2\alpha} e^{-4\pi\alpha A(p)} e^{O(t_\eps |x|)} K(t_\eps x) \prod_{1\le i\le m,p_i\neq p} e^{-4\pi\alpha_i G_{p_i}(t_\eps x)}\stackrel{\eps \to 0}{\longrightarrow } c(p)|x|^{2\alpha}
$$
in $L^q_{loc}(\R^2)$ for some $q>1$.  Fix $R>0$ and let $\psi_\eps$ be the solution of 
$$
\begin{Si}{cc}
-\Delta \psi_\eps =  t_\eps^{-2\alpha} \rho_\eps  h(t_\eps x) e^{\ph_\eps}- \frac{t_\eps^2 \rho_\eps}{|\Sigma|} & \mbox{ in } B_R(0)  \\
\psi_\eps =0 & \mbox{ su } \partial B_R(0)
\end{Si}.
$$
Since $\Delta \psi_\eps$ is bounded in $L^q(B_R(0))$ with $q>1$, elliptic regularity shows that $\psi_\eps$ is bounded in $W^{2,q}(B_R(0))$ and by Sobolev's embeddings we may extract a subsequence such that $\psi_\eps$ converges in $H^1(B_R(0))\cap C^{0,\lambda}(B_R(0))$.   The function $\xi_\eps = \ph_\eps -\psi_\eps$ is harmonic in $B_R$ and bounded from above. Furthermore $\xi_\eps\left(\frac{x_\eps}{t_\eps}\right)= -\psi_\eps\left(\frac{x_\eps}{t_\eps}\right)$ is bounded from below, hence  by Harnack inequality $\xi_\eps$ is uniformly bounded in $C^{2}(\ov{B_{\frac{R}{2}}}(0))$. Thus $\ph_\eps$ is bounded in $W^{2,q}(B_{\frac{R}{2}})$ and we can extract a subsequence converging in $H^1(B_{\frac{R}{2}})\cap C^{0,\lambda}(B_\frac{R}{2})$. Using a diagonal argument we find a subsequence for which
$\ph_\eps$ converges in $H^1_{loc}(\R^2)\cap C^{0,\lambda}_{loc}(\R^2)$ to a function $\ph_0$ solving
$$
-\Delta\ph_0 = 8\pi (1+\alpha) c(p)|x|^{2\alpha} e^{\ph_0}
$$ 
on $\R^2$ with 
$$
\int_{\R^2} |x|^{2\alpha}e^{\ph_0(x)} dx <\infty.
$$
The classification result in \cite{praj-tar} yields
$$
\ph_0(x)=-2\log\left(1+\frac{\pi e^{\lambda} c(p)}{1+\alpha} |x|^{2(1+\alpha)}\right)+\lambda
$$
for some $\lambda\in \R$. To conclude the proof it remains to note that, since $0$ is the unique maximum point of $\ph_0$, the uniform convergence of $\ph_\eps$ implies $\frac{x_\eps}{t_\eps}\ra 0$ and   $\lambda=0$.
\end{proof}

As in \cite{djlw}, to give a lower bound on $J_\eps (u_\eps)$ we need the following estimate from below for  $u_\eps$:

\begin{lemma}\label{lemma stima}
Fix $R>0$ and define $r_\eps= t_\eps R$. If $\alpha<0$ and $u_\eps$ satisfies (\ref{eq desing}), (\ref{norm}), (\ref{blowup}), then 
$$
u_\eps \ge \ov{\rho}\; G_p-\lambda_\eps-\ov{\rho}\; A(p) + 2 \log\left(\frac{ R^{2(1+\alpha)}}{1+\frac{\pi c(p)}{1+\alpha} R^{2(1+\alpha)}}\right) +o_\eps(1)
$$
in $ \Sigma \bs B_{r_\eps}(p)$.
\end{lemma}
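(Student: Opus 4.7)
The plan is a comparison argument based on the weak minimum principle. I would set
\[
w_\eps(x) := u_\eps(x) - \ov{\rho}\,G_p(x) + \lambda_\eps + \ov{\rho}\,A(p) - 2\log\!\left(\frac{R^{2(1+\alpha)}}{1+\tfrac{\pi c(p)}{1+\alpha}R^{2(1+\alpha)}}\right),
\]
so that the claim reduces to $w_\eps \ge o_\eps(1)$ on $\Sigma\bs B_{r_\eps}(p)$. The two ingredients are (i) a uniform bound $w_\eps = o_\eps(1)$ on the boundary $\partial B_{r_\eps}(p)$, and (ii) distributional super-harmonicity of $w_\eps$ on $\Sigma\bs B_{r_\eps}(p)$; the minimum principle then closes the argument.

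For (i), I work in normal coordinates at $p$, writing $x = t_\eps y$ with $|y|=R$ on $\partial B_{r_\eps}(p)$. Lemma \ref{lemma risc} gives $\ph_\eps \to \ph_0$ uniformly on $\{|y|=R\}$, hence
\[
u_\eps(t_\eps y) \;=\; \lambda_\eps + \ph_0(y) + o(1) \;=\; \lambda_\eps - 2\log\!\left(1+\tfrac{\pi c(p)}{1+\alpha}R^{2(1+\alpha)}\right) + o(1).
\]
The expansion \eqref{esp green} combined with $\log t_\eps = -\lambda_\eps/(2(1+\alpha))$ gives
\[
\ov{\rho}\,G_p(t_\eps y) \;=\; -4(1+\alpha)\log(t_\eps R) + \ov{\rho}\,A(p) + O(t_\eps) \;=\; 2\lambda_\eps - 2\log R^{2(1+\alpha)} + \ov{\rho}\,A(p) + o(1).
\]
Substituting both into the definition of $w_\eps$ and cancelling yields the desired $w_\eps = o_\eps(1)$ uniformly on $\partial B_{r_\eps}(p)$.

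For (ii), from \eqref{eq desing} and $-\Delta_g G_p = \delta_p - 1/|\Sigma|$ I compute, in the distributional sense,
\[
-\Delta_g w_\eps \;=\; \rho_\eps h e^{u_\eps} + \tfrac{\eps}{|\Sigma|} - \ov{\rho}\,\delta_p.
\]
On $\Sigma\bs B_{r_\eps}(p)$ the $\delta_p$ term is absent and the remaining two are non-negative, so $-\Delta_g w_\eps \ge 0$ in the sense of distributions; since $w_\eps$ is continuous on $\Sigma\bs\{p\}$ by remark \ref{oss reg}, the weak minimum principle gives $w_\eps \ge \min_{\partial B_{r_\eps}(p)} w_\eps = o_\eps(1)$ on all of $\Sigma\bs B_{r_\eps}(p)$.

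The only delicate point I foresee is the behaviour at the other singularities $p_i\neq p$ of $h$: there $h e^{u_\eps}$ may be locally unbounded, but it is still in $L^1$ and has the correct (non-negative) sign, so the distributional super-harmonicity is unaffected and the minimum principle for continuous weak super-solutions applies without modification.
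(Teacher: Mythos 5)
Your proof is correct and follows essentially the same route as the paper: a comparison with $\ov{\rho}\,G_p$ plus a constant, the boundary value on $\partial B_{r_\eps}(p)$ computed from lemma \ref{lemma risc} together with the expansion \eqref{esp green}, and the (super-harmonic) minimum principle on $\Sigma\bs B_{r_\eps}(p)$, noting that $-\Delta_g(u_\eps-\ov{\rho}G_p)=\rho_\eps h e^{u_\eps}+\eps/|\Sigma|\ge 0$ away from $p$. The only difference from the paper's write-up is cosmetic: you absorb the constant into $w_\eps$ at the outset, while the paper fixes $C_\eps$ after evaluating the boundary trace.
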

\begin{proof}
$\forall\; C>0$ we have 
$$
-\Delta_g (u_\eps-\ov{\rho}\; G_p-C) = \rho_\eps \left( h e^{u_\eps }-\frac{1}{|\Sigma|}\right) +\frac{\ov{\rho}}{|\Sigma|} = \rho_\eps h e^{u_\eps} +\frac{\eps}{|\Sigma|}\ge 0.
$$
Let us consider normal coordinates near $p$. We know that 
$$
G_p(x)=-\frac{1}{2\pi} \log |x| + A(p)+O(|x|),
$$
so by lemma \ref{lemma risc} if $x=t_\eps y$ with $|y| =R$ we have
$$
u_\eps (x)-\ov{\rho}\; G_p= \ph_\eps(y)+\lambda_\eps  +4(1+\alpha)\log (t_\eps R) -\ov{\rho}A(p)+o_\eps(1)= 
$$
$$
= -2\log\left(1+\frac{\pi c(p)}{1+\alpha}R^{2(1+\alpha)}\right)-\lambda_\eps +\log{R^{4(1+\alpha)}}-\ov{\rho}\; A(p)+o_\eps(1).
$$
Thus, taking
$$C_\eps=- \lambda_\eps-\ov{\rho}\; A(p) +2 \log\left(\frac{R^{2(1+\alpha)}}{1+\frac{\pi  c(p)}{1+\alpha} R^{2(1+\alpha)}}\right) +o_\eps(1)
$$
we have $u_\eps-\ov{\rho} G_p -C_\eps\ge 0$ on $\partial B_{r_\eps}(p)$ 
and the conclusion follows from the maximum principle. 
\end{proof}

As a consequence we also have

\begin{lemma}\label{lemma limiti}
$t_\eps^2 \ov{u}_\eps \longrightarrow 0$.
\end{lemma}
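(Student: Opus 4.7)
The plan is to show that $|\bar u_\eps| \le \lambda_\eps + O(1)$, and then conclude using the fact that, by definition of $t_\eps$, the quantity $t_\eps^2 \lambda_\eps = \lambda_\eps e^{-\lambda_\eps/(1+\alpha)}$ tends to $0$ as $\lambda_\eps\to+\infty$.

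The upper bound $\bar u_\eps \le \lambda_\eps$ is immediate from $\bar u_\eps \le \max_\Sigma u_\eps = \lambda_\eps$. For the lower bound, I would fix $R>0$, set $r_\eps = t_\eps R$, and split
$$|\Sigma|\;\bar u_\eps = \int_{\Sigma\setminus B_{r_\eps}(p)} u_\eps\, dv_g + \int_{B_{r_\eps}(p)} u_\eps\, dv_g.$$
On $\Sigma\setminus B_{r_\eps}(p)$, lemma \ref{lemma stima} yields the pointwise bound $u_\eps \ge \ov{\rho}\, G_p - \lambda_\eps - C(R) + o_\eps(1)$ for some $C(R)>0$. Integrating, using $\int_\Sigma G_p\, dv_g = 0$ together with the log-singularity estimate $\int_{B_{r_\eps}(p)} |G_p|\, dv_g = O(r_\eps^2 |\log r_\eps|) = o_\eps(1)$, and the fact that $|B_{r_\eps}(p)| = O(t_\eps^2)$, one obtains $\int_{\Sigma\setminus B_{r_\eps}(p)} u_\eps\, dv_g \ge -|\Sigma|\lambda_\eps + O(1)$. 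For the integral over $B_{r_\eps}(p)$, I would invoke lemma \ref{lemma risc}: since $\ph_\eps \to \ph_0$ uniformly on $\ov{B_R(0)}$, $\ph_\eps$ is uniformly bounded on $\ov{B_R(0)}$, which in normal coordinates around $p$ gives $u_\eps \ge \lambda_\eps - M$ on $B_{r_\eps}(p)$; combined with $|B_{r_\eps}(p)| = O(t_\eps^2)$, this yields $\int_{B_{r_\eps}(p)} u_\eps\, dv_g \ge -O(t_\eps^2)$ (in fact non-negative for $\eps$ small). Summing the two contributions gives $\bar u_\eps \ge -\lambda_\eps + O(1)$.

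Combining the two bounds, $|\bar u_\eps| \le \lambda_\eps + O(1)$, hence
$$t_\eps^2\, |\bar u_\eps| \;\le\; t_\eps^2\, \lambda_\eps + O(t_\eps^2) \;=\; -2(1+\alpha)\, t_\eps^2 \log t_\eps + O(t_\eps^2) \;\xrightarrow[\eps\to 0]{}\; 0,$$
since $t_\eps\to 0$. I do not anticipate a serious obstacle here: the proof is just a direct integration of the already-established pointwise lower bound from lemma \ref{lemma stima}, plus the easy observation that the blow-up ball $B_{r_\eps}(p)$ has area $O(t_\eps^2)$, which makes its contribution to $\bar u_\eps$ negligible. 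The only minor care needed is to verify that the $o_\eps(1)$ error in lemma \ref{lemma stima} and the integral of $|G_p|$ on the small ball both vanish uniformly, which follows respectively from the uniform character of the $o_\eps(1)$ therein and from the logarithmic nature of the Green's function singularity.
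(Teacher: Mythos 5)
Your proof is correct and follows essentially the same route as the paper: split $\Sigma$ into a small ball of radius $\sim t_\eps$ around $p$ and its complement, use the uniform convergence of the rescaled functions (lemma \ref{lemma risc}) on the ball and the pointwise lower bound of lemma \ref{lemma stima} outside it, deduce $|\ov{u}_\eps|\le C\lambda_\eps+O(1)$, and conclude from $t_\eps^2\lambda_\eps\to 0$. The only difference is cosmetic (you keep a general radius $Rt_\eps$ and state the bound slightly more sharply), so there is nothing to add.
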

\begin{proof}
By lemma \ref{lemma risc} 
$$
\int_{B_{t_\eps}(p)} u_\eps\; dv_g = t_\eps^2 \int_{B_1(0)} \ph_\eps(y)dy +\lambda_\eps |B_{t_\eps}|= o_\eps(1).
$$
and by the previous lemma
$$
\lambda_\eps |\Sigma|\ge\int_{\Sigma \bs B_{t_\eps}(p)} u_\eps \ge \ov{\rho} \int_{\Sigma\bs B_{t_\eps}(p)} G_p \;dv_g - \lambda_\eps |\Sigma\bs B_{t_\eps}(p)| +O(1).
$$
Thus $\dfrac{|\ov{u}_\eps|}{\lambda_\eps}$ is bounded and, since $\lambda_\eps t_\eps^2 =o_\eps(1)$, we get the conclusion.
\end{proof}

The case $\alpha=0$ can be studied in a similar way. The main difference is that, since we do not know whether  $\frac{|x_\eps|}{t_\eps}$ is bounded, we have to center the scaling in $p_\eps$ and not in $p$. Note that $\beta(p)=0$ means that $p\in \Sigma\bs S$ is a regular point of $h$.

\begin{lemma}\label{caso0}
Assume that $\alpha=0$ and that $u_\eps$ satisfies \eqref{eq desing}, \eqref{norm} and \eqref{blowup}. In normal coordinates near $p$ define
$$
\psi_\eps (x)= u_\eps (x_\eps + t_\eps x)-\lambda_\eps \quad \mbox{ where } \quad  t_\eps= e^{-\frac{\lambda_\eps}{2}}.
$$
Then
\begin{enumerate}
\item $\psi_\eps$ converges in $C^1_{loc}(\R^2)$ to $$\psi_0(x)= -2\log(1+\pi h(p) |x|^2)$$ 
\item $\forall \;R>0$ one has 
 $$u_\eps\ge 8\pi G_{p_\eps} - \lambda_\eps -8\pi A(p) + 2\log\left(\frac{R^2}{1+\pi h(p) R^2}\right)+o_\eps(1)$$ in $\Sigma \bs B_{R t_\eps}(p_\eps);$
\item  $t_\eps^2 \ov{u}_\eps\to 0$.
\end{enumerate}
\end{lemma}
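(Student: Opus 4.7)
The plan is to mirror the three lemmas \ref{lemma risc}, \ref{lemma stima}, \ref{lemma limiti} that handled the case $\alpha<0$, with two simplifications (the weight $h$ is smooth near $p$ since $p\notin S$, so the classical Chen-Li classification replaces the Prajapat-Tarantello one) and one small complication (the scaling must be centred at $p_\eps$ rather than at $p$, since without the factor $|x|^{2\alpha}$ we have no a priori control on $|x_\eps|/t_\eps$).

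For part (1), I would first verify that $\psi_\eps$ solves
$$
-\Delta \psi_\eps=\rho_\eps\, h(x_\eps+t_\eps x)\, e^{\psi_\eps}-\frac{t_\eps^2\rho_\eps}{|\Sigma|}
$$
on $B_{\delta/t_\eps}(0)$, with $\psi_\eps(0)=0=\max \psi_\eps$ and $\int h(x_\eps+t_\eps x) e^{\psi_\eps}dx\le 1$. Because $p\notin S$, $h(x_\eps+t_\eps x)\to h(p)$ locally uniformly, and one can run verbatim the splitting-into-Newtonian-potential-plus-harmonic-remainder argument of Lemma \ref{lemma risc} (with Harnack controlling the harmonic piece) to extract a subsequence converging in $H^1_{\mathrm{loc}}(\R^2)\cap C^1_{\mathrm{loc}}(\R^2)$ to a limit $\psi_0$ satisfying
$$
-\Delta \psi_0=8\pi h(p)e^{\psi_0}\quad\text{on } \R^2,\qquad \int_{\R^2}e^{\psi_0}dx<\infty.
$$
The Chen-Li classification then gives $\psi_0(x)=\mu-2\log(1+c|x-x_0|^2)$; since $0$ is the unique maximum point of $\psi_0$, this forces $x_0=0$ and $\mu=0$, while the total-mass identity $8\pi h(p)\int e^{\psi_0}=8\pi$ fixes $c=\pi h(p)$.

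For part (2), since $\ov\rho=8\pi$ when $\alpha=0$, on $\Sigma\bs\{p_\eps\}$ one has $-\Delta_g(u_\eps-8\pi G_{p_\eps}-C)=\rho_\eps h e^{u_\eps}+\eps/|\Sigma|\ge 0$. Using the expansion $G_{p_\eps}(p_\eps+t_\eps y)=-\tfrac{1}{2\pi}\log(t_\eps|y|)+A(p_\eps)+O(t_\eps)$, the continuity $A(p_\eps)\to A(p)$, and part (1), a direct computation on $\partial B_{Rt_\eps}(p_\eps)$ gives
$$
u_\eps-8\pi G_{p_\eps}=-\lambda_\eps-8\pi A(p)+2\log\frac{R^2}{1+\pi h(p)R^2}+o_\eps(1).
$$
Choosing $C_\eps$ equal to the right-hand side and applying the maximum principle outside $B_{Rt_\eps}(p_\eps)$ yields (2). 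Part (3) then transcribes Lemma \ref{lemma limiti}: (1) gives $\int_{B_{t_\eps}(p_\eps)}u_\eps\,dv_g=\lambda_\eps|B_{t_\eps}|+o_\eps(1)$, part (2) with $R=1$ (combined with $\int_\Sigma G_{p_\eps}\,dv_g=0$) gives $\int_{\Sigma\bs B_{t_\eps}(p_\eps)}u_\eps\,dv_g\ge -\lambda_\eps|\Sigma|+O(1)$, and together with the trivial $u_\eps\le \lambda_\eps$ these bound $|\ov{u}_\eps|/\lambda_\eps$; since $\lambda_\eps t_\eps^2=\lambda_\eps e^{-\lambda_\eps}\to 0$, part (3) follows.

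The only step that is not a pure transcription from the singular analysis is part (1), and the mild obstacle is that the centre of rescaling is $p_\eps$, not $p$; this is precisely why one relies on $\psi_\eps(0)=0=\max\psi_\eps$ to identify the translation parameter $x_0=0$ in the Chen-Li classification, rather than trying to locate the blow-up point inside the limiting bubble as in Lemma \ref{lemma risc}.
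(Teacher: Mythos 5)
Your proposal is correct and coincides with what the paper intends: the paper gives no separate proof of lemma \ref{caso0}, stating only that the case $\alpha=0$ follows the arguments of lemmas \ref{lemma risc}, \ref{lemma stima} and \ref{lemma limiti} with the scaling centred at $p_\eps$, which is exactly what you carry out (with the Chen--Li classification of $-\Delta\psi_0=8\pi h(p)e^{\psi_0}$ replacing the Prajapat--Tarantello one, and the normalization $\psi_\eps(0)=0=\max\psi_\eps$ fixing the parameters in the limit bubble). The barrier computation for part (2) and the averaging argument for part (3) are the correct transcriptions, so no gaps remain.
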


\section{A lower bound}
In this section and in the next one we present the proof of theorem \ref{teo 1}. We begin by giving an estimate from below of $\dis{\inf_{H^1(\Sigma)}J}$.  As before we consider $u_\eps$ satisfying \eqref{minim}, \eqref{eq desing}, \eqref{norm}, and (\ref{blowup}). Again we will focus on the case $\alpha<0$ since the computation for $\alpha=0$ is equivalent to the one in \cite{djlw}. We consider normal coordinates in a small ball $B_\delta (p)$ and assume that $G_p$ has the expansion (\ref{esp green}) in $B_\delta(p)$. Let $t_\eps$ be defined as in \eqref{riscalamento}, then $\forall \;R>0$ we shall consider the decomposition

$$\dis{\int_{\Sigma}|\nabla_g u_\eps|^2dv_g= \int_{\Sigma\bs B_\delta(p)} |\nabla_g u_\eps|^2dv_g + \int_{B_\delta\bs B_{r_\eps}(p)} |\nabla_g u_\eps|^2dv_g + \int_{B_{r_\eps}(p)}|\nabla_g u_\eps|^2dv_g}.$$

On $\Sigma\bs B_\delta(p)$ we can use lemma \ref{lemma riass} and an integration by parts to obtain:

\begin{eqnarray}\label{esterno}
\int_{\Sigma\bs B_\delta} |\nabla_g u_\eps|^2 dv_g &=&  \ov{\rho}^2\int_{\Sigma\bs B_\delta}|\nabla_g G_p|^2 dv_g +o_\eps(1)= \nonumber \\
 &=&-\frac{\ov{\rho}^2}{|\Sigma|} \int_{\Sigma\bs B_\delta}   G_p\; dv_g -  \ov{\rho}^2\int_{\partial B_\delta}  G_p \D{G_p}{n} \; d\sigma_g+o_\eps(1)= \nonumber \\
 &=&- \ov{\rho}^2\int_{\partial B_\delta}   G_p \D{G_p}{n} d\sigma_g +o_\eps(1)+o_\delta (1).
\end{eqnarray}

On $B_{r_\eps}(p)$ the convergence result for the scaling (\ref{riscalamento}) stated  in lemma \ref{lemma risc} yields 

\begin{equation}\label{interno}
\int_{B_{r_\eps}} |\nabla_g u_\eps|^2 dv_g = \int_{B_R(0)} |\nabla \ph_0|^2 dx +o_\eps(1)= 2\ov{\rho}\left(\log\left(1+\frac{\pi \;c(p)}{1+\alpha} R^{2(1+\alpha)}\right)-1\right)+o_\eps(1)+o_R(1).
\end{equation}

For the remaining term we can use (\ref{eq desing}) and lemma \ref{lemma riass} to obtain

\begin{eqnarray}\label{anello}
\int_{B_\delta \bs B_{r_\eps}} \hspace{-0.2cm} |\nabla_g u_\eps|^2 dv_g \hspace{-0.1cm} &=&   \hspace{-0.1cm}\rho_\eps \int_{B_\delta \bs B_{r_\eps}}  \hspace{-0.2cm}  h  e^{u_\eps} u_\eps dv_g  - \frac{\rho_\eps}{|\Sigma|}\int_{{B_\delta \bs B_{r_\eps}}}   \hspace{-0.2cm} u_\eps dv_g+ \int_{\partial B_\delta } u_\eps \D{u_\eps}{n} d\sigma_g- \int_{\partial { B_{r_\eps}}}  u_\eps \D{u_\eps }{n} d\sigma_g = \nonumber \\
&=&  \hspace{-0.1cm} \rho_\eps \int_{B_\delta \bs B_{r_\eps}}  \hspace{-0.2cm}  h  e^{u_\eps} u_\eps dv_g - \frac{\rho_\eps}{|\Sigma|}\int_{{B_\delta \bs B_{r_\eps}}}   \hspace{-0.2cm}  u_\eps dv_g + \ov{u}_\eps \int_{\partial B_\delta } \D{u_\eps}{n} d\sigma_g - \int_{\partial { B_{r_\eps}}}  u_\eps \D{u_\eps }{n} d\sigma_g  + \nonumber \\
& & + \ov{\rho}^2\int_{\partial B_\delta} G_p \D{G_p}{n}d\sigma_g +o_\eps(1).
\end{eqnarray}

By lemma \ref{lemma stima} and (\ref{norm}) we get
\begin{eqnarray}\label{parte1}
 \rho_\eps \int_{B_\delta \bs B_{r_\eps}}  h e^{u_\eps} u_\eps dv_g &\ge& \rho_\eps  \ov{\rho} \int_{B_\delta \bs B_{r_\eps}}   h e^{u_\eps} G_p  dv_g - \rho_\eps \lambda_\eps \int_{B_\delta \bs B_{r_\eps}}  h e^{u_\eps} dv_g
 +O_R(1)\rho_\eps \int_{B_\delta \bs B_{r_\eps}}   h e^{u_\eps} dv_g = \nonumber \\
&=& \rho_\eps  \ov{\rho} \int_{B_\delta \bs B_{r_\eps}}   h e^{u_\eps} G_p dv_g - \rho_\eps \lambda_\eps \int_{B_\delta \bs B_{r_\eps}}  h e^{u_\eps} dv_g  +o_\eps(1).
\end{eqnarray}

Again by (\ref{eq desing}) and lemma \ref{lemma riass}

\begin{eqnarray}\label{parte2}
\rho_\eps \int_{B_\delta \bs B_{r_\eps}}  \hspace{-0.4cm}  h e^{u_\eps} G_p dv_g  \hspace{-0.14cm} &=&  \hspace{-0.25cm} \int_{B_\delta \bs B_{r_\eps}} G_p \left(-\Delta u_\eps  +\frac{\rho_\eps}{|\Sigma|}\right) dv_g = \nonumber \\&=&  \hspace{-0.2cm} - \frac{1}{|\Sigma|}\int_{B_\delta \bs B_{r_\eps}}   \hspace{-0.5cm} u_\eps dv_g +\hspace{-0.1cm}\int_{\partial  B_\delta}  \hspace{-0.2cm} u_\eps \D{G_p}{n} \hspace{-0.05cm} - G_p \D{u_\eps}{n} d\sigma_g +\hspace{-0.1cm}\int_{\partial  B_{r_\eps}}  \hspace{-0.3cm} G_p \D{u_\eps}{n}  \hspace{-0.05cm}- u_\eps \D{G_p}{n} d\sigma_g  \hspace{-0.02cm} +  \hspace{-0.01cm}o_\delta(1)= \nonumber \\
&=&  \hspace{-0.25cm}- \frac{1}{|\Sigma|}\int_{B_\delta \bs B_{r_\eps}}  \hspace{-0.3cm}  u_\eps dv_g +\ov{u}_\eps\int_{\partial  B_\delta}  \D{G_p}{n} d\sigma_g +\int_{\partial  B_{r_\eps}}  G_p \D{u_\eps}{n} d\sigma_g - \int_{\partial  B_{r_\eps}}   u_\eps \D{G_p}{n} d\sigma_g +\nonumber \\
 & &\hspace{-0.25cm} + \;o_\eps(1) +o_\delta(1),
\end{eqnarray}
and
\begin{eqnarray}\label{parte3}
\rho_\eps \lambda_\eps \int_{B_\delta \bs B_{r_\eps}} h e^{u_\eps} dv_g &=&    -  \lambda_\eps\int_{\partial B_\delta \bs B_{r_\eps}} \D{u_\eps}{n} d\sigma_g  +\frac{\rho_\eps \lambda_\eps}{|\Sigma|} \left(Vol(B_\delta)-Vol(B_{r_\eps})\right) =\nonumber \\
 &=& -\lambda_\eps \int_{\partial B_\delta} \D{u_\eps}{n} d\sigma_g + \lambda_\eps\int_{\partial B_{r_\eps}} \D{u_\eps}{n} d\sigma_g+ \frac{\rho_\eps  \lambda_\eps}{|\Sigma|} Vol(B_\delta)+o_\eps(1).
\end{eqnarray}

Using  \eqref{anello}, \eqref{parte1}, \eqref{parte2} and \eqref{parte3} we get

\begin{eqnarray}\label{anello2}
\int_{B_\delta \bs B_{r_\eps}}|\nabla_g u_\eps|^2dv_g &\ge& - (16\pi(1+\alpha)-\eps) \frac{1}{|\Sigma|}\int_{B_\delta \bs B_{r_\eps}} u_\eps  \; dv_g  - \frac{\rho_\eps\lambda_\eps}{|\Sigma|}Vol(B_\delta)+ \nonumber \\
 &+&   \ov{\rho}\;  \ov{u}_\eps \int_{\partial B_\delta}\D{G_p}{n} d\sigma_g+ \lambda_\eps \int_{\partial B_\delta} \D{u_\eps}{n} d\sigma_g +  \ov{u}_\eps \int_{\partial B_{\delta}} \D{u_\eps}{n} d\sigma_g +  \\
 &+&  \ov{\rho}^2 \int_{\partial B_\delta} G_p \D{ G_p}{n} d\sigma_g  -\ov{\rho}\int_{\partial B_{r_\eps}} u_\eps \D{G_p}{n} d\sigma_g -\int_{\partial B_{r_\eps}} \Big( u_\eps
 - \ov{\rho}\;G_p +\lambda_\eps \Big)\D{u_\eps}{n} +\nonumber \\
 &+&o_\eps(1)+o_\delta(1) \nonumber .
\end{eqnarray}

By lemmas \ref{lemma riass} and \ref{lemma limiti} we can say that 
$$
\int_{B_\delta \bs B_{r_\eps}}u_\eps dv_g =  \int_{B_\delta \bs B_{r_\eps}}  (u_\eps-\ov{u}_\eps) dv_g + \ov{u}_\eps (Vol (B_\delta)-Vol(B_{r_\eps}))= \ov{u}_\eps Vol(B_\delta) + o_\delta(1)+o_\eps(1).
$$

Using Green's formula
$$
\ov{u}_\eps \int_{\partial B_\delta} \D{G_p}{n} d\sigma_g  =   - \ov{u}_\eps\int_{\Sigma\bs B_\delta} \Delta_g G_p \; dv_g  =- \ov{ u}_\eps \left(1-\frac{Vol(B_\delta)}{|\Sigma|}\right)
.$$

Similarly

$$
\int_{\partial B_\delta}\D{u_\eps}{n}d\sigma_g=  -\int_{\Sigma\bs B_\delta} \Delta u_\eps \; dv_g  = \int_{\Sigma\bs B_\delta} \rho_\eps \left( h e^{u_\eps}-\frac{1}{|\Sigma|}\right)dv_g \ge  -\rho_\eps \left(1-\frac{Vol(B_\delta)}{|\Sigma|}\right)
$$
and
\begin{eqnarray*}
\ov{u}_\eps\int_{\partial B_\delta} \D{u_\eps}{n}d\sigma_g &=&   \ov{u}_\eps \rho_\eps e^{\ov{u}_\eps} \int_{\Sigma\bs B_\delta (p)}  h \;e^{u_\eps -\ov{u}_\eps}dv_g-\ov{u}_\eps \rho_\eps \left(1-\frac{Vol(B_\delta)}{|\Sigma|}\right)=\\
&=&- \ov{u}_\eps \rho_\eps \left(1-\frac{Vol(B_\delta)}{|\Sigma|}\right) +o_\eps(1).
\end{eqnarray*}

Lemma \ref{lemma risc} yields

\begin{eqnarray*}
\int_{\partial B_{r_\eps}} u_\eps\D{G_p}{n} d\sigma_g &=&  \lambda_\eps \int_{\partial B_\eps}\D{G_p}{n} d\sigma_g + t_\eps\int_{\partial B_R(0)}\ph_\eps \D{G_p}{n}(t_\eps x)(1+o_\eps(1))d\sigma=\\
&=& - \lambda_\eps \left(1-\frac{Vol(B_{r_\eps})}{|\Sigma|}\right)  +t_\eps \int_{\partial B_R(0)} \ph_0 \left(-\frac{1}{2\pi t_\eps R}  +O(1)\right) d\sigma  =\\
&=&-\lambda_\eps +2  \log\left(1+ \frac{\pi\; c(p)}{1+\alpha} R^{2(1+\alpha)}\right)+ o_\eps(1)
\end{eqnarray*}

and the estimate in lemma \ref{lemma stima} gives

$$
-\int_{\partial B_{r_\eps}} \Big( u_\eps - \ov{\rho}\;G_p +\lambda_\eps \Big)\D{u_\eps}{n}  d\sigma_g \ge $$
$$
\ge  \left(2 \log\left(\frac{ R^{2(1+\alpha)}}{1+ \frac{ \pi c(p)}{(1+\alpha)} R^{2(1+\alpha)}}\right) - \ov{\rho}  A(p)\right) \frac{8 \pi^2  c(p) R^{2 (1+\alpha)} }{ \left(1+\frac{\pi  c(p) R^{2 (1+\alpha)}
   }{1+\alpha}\right)} +o_\eps(1) =
   $$
   $$
   = -  \ov{\rho}^2A(p) - 2 \; \ov{\rho} \; \log\left(\frac{\pi c(p)}{1+\alpha}\right) +o_\eps(1) +o_R(1).
$$

Hence

\begin{eqnarray}\label{anello finale}
\int_{B_\delta \bs B_{r_\eps}}|\nabla_g u_\eps|^2dv_g &\ge&-(16\pi(1+\alpha)-\eps)\ov{u}_\eps +\eps\lambda_\eps + \ov{\rho}^2 \int_{\partial B_\delta} G_p \D{G_p}{n} d\sigma_g +\nonumber \\
&-&   2 \ov{\rho}  \log \left(1+\frac{\pi c(p)}{1+\alpha} R^{2(1+\alpha)}\right) - \ov{\rho}^2A(p) - 2 \ov{\rho}\log\left(\frac{\pi c(p)}{1+\alpha}\right) +\nonumber \\
&+ & o_\eps(1)+o_\delta(1)+o_R(1).
\end{eqnarray}

By \eqref{esterno}, \eqref{interno} and \eqref{anello finale} we can therefore conclude 
\begin{eqnarray*}
\int_{\Sigma}|\nabla_g u_\eps|^2 dv_g 
&\ge & -(16\pi(1+\alpha)-\eps)\ov{u}_\eps +\eps \lambda_\eps  -\ov{\rho}^2A(p)-2\ov{\rho}\log\left(\frac{\pi c(p)}{1+\alpha}\right) -2\ov{\rho}+ \\
 & & + \;o_\eps(1)+o_\delta(1) + o_R(1),
\end{eqnarray*}

so that 

\begin{eqnarray*}
J_\eps (u_\eps)&\ge &  \frac{\eps}{2}(\lambda_\eps-\ov{u}_\eps) -\frac{\ov{\rho}^2}{2}A(p)-\ov{\rho}\log\left(\frac{\pi c(p)}{1+\alpha}\right)-\ov{\rho}+\rho_\eps \log |\Sigma| +  o_\eps(1)+ o_\delta(1) + o_R(1) \ge \\
&\ge& -\ov{\rho} \left( 4\pi(1+\alpha) A(p)+1+\log \left(\frac{\pi c(p)}{1+\alpha}\right)-\log|\Sigma|\right) +o_\eps(1)+o_\delta(1) + o_R(1).
\end{eqnarray*}

As $\eps,\delta\to 0$ and $R\to \infty$ we obtain
\begin{eqnarray}\label{stima basso}
\inf_{H^1(\Sigma)} J&\ge& -\ov{\rho} \left( 4\pi(1+\alpha) A(p)+1+\log \left(\frac{\pi c(p)}{1+\alpha}\right)-\log{|\Sigma|}\right)=\\
&=& -\ov{\rho}\left( 1+\log\frac{\pi}{|\Sigma|} +4\pi A(p)+\log  \left(\frac{K(p)}{1+\alpha}\prod_{q\in S, q\neq p} e^{-4\pi\beta(q) G_q(p)}\right)\right). \nonumber
\end{eqnarray}

Using lemma \ref{caso0} it is possible to prove that (\ref{stima basso}) holds even for $\alpha =0$. About the blow-up point $p$ we only know that $\beta(p)=\alpha$, so  we have proved 
\begin{prop}\label{prop1}
If $J$ has no minimum point, then
$$
\inf_{H^1(\Sigma)} J \ge-\ov{\rho} \left( 1+\log\frac{\pi}{|\Sigma|}+\max_{p\in \Sigma ,\beta(p)=\alpha}  \left\{4\pi A(p)+ \log  \left(\frac{K(p)}{1+\alpha}\prod_{q\in S, q\neq p} e^{-4\pi\beta(q) G_q(p)}\right)\right\}\right).
$$
\end{prop}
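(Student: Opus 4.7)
The plan is to use the concentration-compactness alternative to isolate a single blow-up point $p$ for the minimizers $u_\eps$ of $J_\eps$, estimate $\tfrac{1}{2}\int|\nabla u_\eps|^2\,dv_g$ from below in terms of the geometric quantities at $p$, and then send $\eps\to 0$. By the normalization \eqref{norm}, the functional reduces to
\[
J_\eps(u_\eps) = \tfrac{1}{2}\int_\Sigma|\nabla u_\eps|^2\,dv_g + \rho_\eps\,\ov{u}_\eps + \rho_\eps\log|\Sigma|,
\]
so a sharp lower bound for the Dirichlet energy that matches the growth of $-\rho_\eps\,\ov{u}_\eps$ is exactly what is needed. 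Since the hypothesis rules out the bounded alternative in Lemma \ref{lemma riass}, $u_\eps$ must blow up at some $p$ with $\beta(p)=\alpha$; we focus on $\alpha<0$, the case $\alpha=0$ being analogous after substituting Lemma \ref{caso0}.

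I would split $\int_\Sigma|\nabla u_\eps|^2$ into the exterior $\Sigma\setminus B_\delta(p)$, the annulus $B_\delta(p)\setminus B_{r_\eps}(p)$ with $r_\eps=t_\eps R$, and the inner disc $B_{r_\eps}(p)$, and estimate each separately. On the exterior, the $W^{1,s}$ convergence $u_\eps-\ov{u}_\eps\to\ov{\rho}\,G_p$ of Lemma \ref{lemma riass} together with an integration by parts against $-\Delta_g G_p=-\tfrac{1}{|\Sigma|}$ reduces the integral to the boundary flux $-\ov{\rho}^{\,2}\int_{\partial B_\delta}G_p\,\D{G_p}{n}\,d\sigma_g+o_\delta(1)+o_\eps(1)$. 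On the inner disc, the rescaled convergence $\ph_\eps\to\ph_0$ in $H^1_{\mathrm{loc}}(\R^2)$ from Lemma \ref{lemma risc} gives $\int_{B_{r_\eps}}|\nabla u_\eps|^2=\int_{B_R}|\nabla\ph_0|^2+o_\eps(1)$; a direct computation from the explicit formula for $\ph_0$ yields $2\ov{\rho}\bigl(\log(1+\tfrac{\pi c(p)}{1+\alpha}R^{2(1+\alpha)})-1\bigr)+o_R(1)$.

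The annular contribution is the technical heart. Using \eqref{eq desing} I rewrite $\int_{B_\delta\setminus B_{r_\eps}}|\nabla u_\eps|^2$ via Green's identity as $\rho_\eps\int h e^{u_\eps}u_\eps\,dv_g-\tfrac{\rho_\eps}{|\Sigma|}\int u_\eps\,dv_g$ plus boundary terms on $\partial B_\delta$ and $\partial B_{r_\eps}$. To control the potential term $\rho_\eps\int h e^{u_\eps}u_\eps$, which is where the logarithmic correction is generated, I would substitute the pointwise lower bound on $u_\eps$ from Lemma \ref{lemma stima}; this is legitimate because Lemma \ref{lemma riass} ensures that the $h e^{u_\eps}$-mass outside the annulus is $o_\eps(1)$. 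The resulting $\rho_\eps\int h e^{u_\eps}\,G_p$ is integrated by parts once more against $G_p$, producing further boundary contributions that I would evaluate using the expansion $G_p(x)=-\tfrac{1}{2\pi}\log|x|+A(p)+O(|x|)$ and the scaling behaviour of $\ph_\eps$ on $\partial B_R(0)$ provided by Lemma \ref{lemma risc}; Lemma \ref{lemma limiti} then disposes of the remaining $\ov{u}_\eps$-dependence on $\partial B_{r_\eps}$.

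Assembling the three pieces, the $\partial B_\delta$ boundary integrals from the exterior and annulus cancel, and what survives on $\partial B_{r_\eps}$ produces exactly $-\ov{\rho}^{\,2}A(p)-2\ov{\rho}\log(\pi c(p)/(1+\alpha))-2\ov{\rho}$. Substituting into $J_\eps(u_\eps)$ and using $\eps(\lambda_\eps-\ov{u}_\eps)\ge 0$ gives, after the limits $\eps,\delta\to 0$ and $R\to\infty$ (in that order),
\[
\liminf_{\eps\to 0}\inf J_\eps \;\ge\; -\ov{\rho}\Bigl(1+\log\tfrac{\pi}{|\Sigma|}+4\pi A(p)+\log\tfrac{c(p)}{1+\alpha}\Bigr)
\]
at the \emph{specific} blow-up point $p$. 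Since $J_\eps\to J$ pointwise in $H^1(\Sigma)$, $\limsup_{\eps\to 0}\inf J_\eps\le\inf J$, so the same bound holds for $\inf J$. As the only a priori information on $p$ is $\beta(p)=\alpha$, the strongest guaranteed bound is obtained by maximizing the right-hand side over $\{p:\beta(p)=\alpha\}$, which yields the proposition. The main obstacle is the bookkeeping on the annulus, where many boundary terms have to be tracked simultaneously for the clean cancellation to occur; the use of Lemma \ref{lemma stima} is what forces the factor $(1+\alpha)$ to appear inside the final logarithm.
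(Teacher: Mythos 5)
Your proposal follows the paper's own argument essentially verbatim: the same decomposition of $\int_\Sigma|\nabla_g u_\eps|^2dv_g$ into exterior, annulus and inner disc, the same use of Lemmas \ref{lemma riass}, \ref{lemma risc}, \ref{lemma stima} and \ref{lemma limiti} (with Lemma \ref{caso0} for $\alpha=0$), the same boundary-term cancellations, and the passage $\limsup_{\eps\to 0}\inf J_\eps\le \inf J$, which the paper leaves implicit. The only caveat is one of wording: since the blow-up point $p$ is unknown except for $\beta(p)=\alpha$, the guaranteed bound is the worst case over such points, i.e.\ one maximizes the expression inside the parenthesis (equivalently minimizes the right-hand side of your displayed estimate), which is what the proposition states and evidently what you intend.
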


Notice that, if $\alpha<0$, the set
$$
\left\{ p \in \Sigma \;:\; \beta(p)=\alpha \right\}=\left\{p_i\;:\; i\in\{1,\ldots,m\},\;\alpha_i=\alpha\right\}
$$
is finite, while if $\alpha =0$
$$
\left\{ p \in \Sigma \;:\; \beta(p)=\alpha\right\}=\Sigma\bs S.
$$
Although this set is not finite, the maximum in the above expression is still well defined since the function 
$$
p\longmapsto 4\pi A(p)+ \log  \left(K(p)\prod_{q\in S} e^{-4\pi\beta(q) G_q(p)}\right) = 4\pi A(p)+ \log   h(p)
$$ 
is continuous on $\Sigma\bs S$ and approaches $-\infty$ near $S$.

\section{An estimate from above}
In order to complete the proof of theorem \ref{teo 1}  we need to exhibit a sequence  $\ph_\eps \in H^1(\Sigma)$ such that  
$$
J(\ph_\eps)\longrightarrow  -\ov{\rho} \left( 1+\log\frac{\pi}{|\Sigma|}+\max_{p\in \Sigma ,\beta(p)=\alpha} \left\{ 4\pi A(p)+ \log  \left(\frac{K(p)}{1+\alpha}\prod_{q\in S, q\neq p} e^{-4\pi\beta(q) G_q(p)}\right)\right\}\right) 
$$
Let us define $r_\eps:=\gamma_\eps \eps^\frac{1}{2(1+\alpha)}$ where $\gamma_\eps$ is chosen so that
\begin{equation}\label{condizioni}
\gamma_\eps\to+\infty,\quad r_\eps^2 \log\eps\ra 0, \quad r_\eps^2 \log(1+\gamma_\eps^{2(1+\alpha)})\ra  0.
\end{equation}
Let $p\in \Sigma$ be such that $\beta(p)=\alpha$ and 
$$4\pi A(p)+ \log  \left(\frac{K(p)}{1+\alpha}\prod_{q\in S, q\neq p} e^{-4\pi\beta(q) G_q(p)}\right)=\max_{\xi \in \Sigma ,\beta(\xi)=\alpha} \left\{4\pi A(\xi)+ \log  \left(\frac{K(\xi)}{1+\alpha}\prod_{q\in S, q\neq \xi} e^{-4\pi\beta(q) G_q(\xi)}\right)\right\}$$
and consider a cut-off function $\eta_\eps$ such that $\eta_\eps \equiv 1$ in $B_{r_\eps}(p)$, $\eta_\eps \equiv 0$ in $\Sigma\bs B_{2 r_\eps}(p)$ and  $|\nabla_g \eta_\eps|=O(r_\eps^{-1})$. 
Define 
$$
\ph_\eps (x)= \begin{Si}{cc}
-2\log(\eps+r^{2(1+\alpha)})+\log \eps & r\le r_\eps \\
\ov{\rho} \left( G_p-\eta_\eps \sigma \right) +C_\eps +\log \eps & r\ge   r_\eps
\end{Si}
$$
where $r=d(x,p)$,  $\sigma(x)=O(r)$ is defined by
\begin{equation}\label{green beta}
G_p(x) = -\frac{1}{2\pi} \log r +A(p) + \sigma(x),
\end{equation}
and 
$$
C_\eps= -2\log\left(\frac{1 +\gamma_\eps^{2(1+\alpha)}}{\gamma_\eps^{2(1+\alpha)}}\right) -\ov{\rho} \; A(p).
$$ 
In the case $\alpha_i=0$ $\forall\;i$, a similar family of functions was used in \cite{djlw} to give an existence result for \eqref{eq nonsing} by proving, under some strict assumptions on $h$, that
$$
\inf_{H^1(\Sigma)} J_{\ov{\rho}}< -8\pi\left( 1+\log\left(\frac{\pi}{|\Sigma|}\right)  +\max_{p\in \Sigma} \left\{4\pi A(p)+ \log h(p)\right\}\right).
$$ 
Here we only prove large inequality but we have no extra assumptions on $h$.
Taking normal coordinates in a neighbourhood of $p$ it is simple to verify that
\begin{eqnarray*}
\int_{B_{r_\eps}}|\nabla_g \ph_\eps|^2dv_g 
&=&16\pi(1+\alpha)\left(\log\left(1+\gamma_\eps^{2(1+\alpha)}\right) + \frac{1}{1+\gamma_\eps^{2(1+\alpha)}}-1\right)+o_\eps(1)= \\
&=&16\pi(1+\alpha)\left(\log\left(1+\gamma_\eps^{2(1+\alpha)}\right) -1\right)+o_\eps(1).
\end{eqnarray*}

By our definition of $\ph_\eps$
$$
\int_{\Sigma\bs B_{r_\eps}}|\nabla_g \ph_\eps|^2dv_g=\ov{\rho}^2\left(\int_{\Sigma\bs B_{r_\eps}} |\nabla_g G_p|^2dv_g + \int_{\Sigma\bs B_{r_\eps}} |\nabla_g (\eta _\eps\sigma)|^2dv_g  - 2\int_{\Sigma\bs B_{r_\eps}}  \nabla_g G_p \cdot \nabla_g (\eta_\eps \sigma)\; dv_g
\right)
$$
and by the properties of $\eta_\eps$ 
$$
 \int_{\Sigma\bs B_{r_\eps}}  |\nabla_g (\eta_\eps \sigma)|^2 dv_g= \int_{B_{2r_\eps}\bs B_{r_\eps}} |\nabla_g \eta_\eps|^2 \sigma^2  + 2 \eta_\eps \sigma\; \nabla_g \eta_\eps \cdot \nabla_g \sigma + \eta_\eps^2|\nabla_g \sigma|^2 \; dv_g= O(r_\eps^2).
$$
Hence, integrating by parts and using (\ref{green beta}), one has
\begin{eqnarray*}
\int_{\Sigma\bs B_{r_\eps}}|\nabla_g \ph_\eps|^2dv_g  &=&  \ov{\rho} ^2 \left(\int_{\Sigma\bs B_{r_\eps}} |\nabla G_p|^2 dv_g - 2\int_{\Sigma\bs B_{r_\eps}} \nabla_g G_p \cdot \nabla_g ( \eta_\eps \sigma) \;dv_g
\right)+o_\eps(1)=\\
&=& - \ov{\rho} ^2 \left( \frac{1}{|\Sigma|} \int_{\Sigma\bs B_{r_\eps}} (G_p-2\eta_\eps \sigma) \; dv_g +\int_{\partial B_{r_\eps}} (G_p-2\eta_\eps \sigma) \D{ G_p}{n} d\sigma_g  \right)+o_\eps(1)=\\
&=&  -\ov{\rho} ^2 \int_{\partial B_{r_\eps}} (G_p-2\sigma )\D{G_p}{n} d\sigma_g +o_\eps(1)=\\
&=&  -\ov{\rho}^2 \int_{\partial B_{r_\eps}} \hspace{-0.08cm}\left(-\frac{1}{2\pi}\log(r_\eps) +A(p)-\sigma \right)\left( -\frac{1}{2\pi r_\eps}+   \nabla \sigma \right)(1+O(r_\eps^2))d\sigma +o_\eps(1)=\\
&=& -\ov{\rho} ^2 \int_{\partial B_{r_\eps}} \left( \frac{\log r_\eps}{4\pi^2 r_\eps }-\frac{1}{2\pi r_\eps} A(p) +O(\log r_\eps) + O(1)  \right)d\sigma +o_\eps(1)=\\
&=&  -\frac{\ov{\rho} ^2 }{2\pi} \log (\gamma_\eps \eps^\frac{1}{2(1+\alpha)}) +\ov{\rho}^2 A(p) +o_\eps (1)=\\
&=&   -2\ov{\rho} \left( \log \gamma_\eps^{2(1+\alpha)} + \log \eps - 4 \pi (1+\alpha)A(p)\right) +o_\eps (1).
\end{eqnarray*}

Thus 
\begin{eqnarray}\label{grad}
\int_{\Sigma}|\nabla_g \ph_\eps|^2 dv_g  &=& 2\ov{\rho}\left(\log\left( \frac{1+\gamma_\eps^{2(1+\alpha)}}{\gamma_\eps^{2(1+\alpha)}}\right) -1+ 4\pi(1+\alpha)A(p)- \log \eps \right)+o_\eps(1)=\nonumber \\
&=&-2\ov{\rho} \left( 1- 4\pi(1+\alpha)A(p)+ \log \eps \right) +o_\eps(1).
\end{eqnarray}

Similarly one has 

\begin{eqnarray*}
\int_{B_{r_\eps}} \ph_\eps  \; dv_g &=& |B_{r_\eps}| \log \eps-4\pi\int_{0}^{r_\eps} r \log \left(\eps + r^{2(1+\alpha)}\right)(1+o_\eps(1))dr   =\\
&=&|B_{r_\eps}| \log \eps -2\pi r_\eps^2 \log \eps  -4\pi \int_0^{r_\eps} r \log\left(1+\frac{r^{2(1+\alpha)}}{\eps}\right)(1+o_\eps(1))dr = \\
&=&O(r_\eps^2 \log \eps ) -4\pi \int_0^{1} r_\eps^2  s \log \left( 1+\gamma_\eps^{2(1+\alpha)} s^{2(1+\alpha)}\right)(1+o_\eps(1))dr=\\
&=&O(r_\eps^2 \log \eps) + O(r_\eps^2 \log(1+\gamma_\eps^{2(1+\alpha)}))=o_\eps(1) 
\end{eqnarray*}
and
\begin{eqnarray*}
\int_{\Sigma\bs B_{r_\eps}} \ph_\eps  \; dv_g &=&  \ov{\rho}  \int_{\Sigma\bs B_{r_\eps}} (G_p-\eta_\eps \sigma) dv_g+( C_\eps +\log\eps) |\Sigma\bs B_{r_\eps}(p)|=\\
&=&|\Sigma| \log \eps -\ov{\rho} |\Sigma|A(p) +o_\eps(1)
\end{eqnarray*}
so that
\begin{equation}\label{media}
\frac{1}{|\Sigma|}\int_{\Sigma}\ph_\eps dv_g = \log \eps -\ov{\rho}\; A(p)+o_\eps(1).
\end{equation}

To compute the integral of the exponential term we fix  a small $\delta>0$ and observe that

\begin{equation*}
 \int_\Sigma  h e^{\ph_\eps} dv_g = \tilde{h}(p) \int_{B_{r_\eps}}  e^{-4\pi\alpha G_p} e^{\ph_\eps} dv_g+ \int_{B_{r_\eps}}\left(\tilde{h}-\tilde{h}(p)\right)  e^{-4\pi\alpha G_p} e^{\ph_\eps} dv_g + \int_{B_\delta\bs B_{r_\eps}} h e^{\ph_\eps}dv_g+ \int_{\Sigma\bs B_\delta} h e^{\ph_\eps} dv_g
\end{equation*}

where $\dis{\tilde{h} = h\; e^{4\pi \alpha G_p}= K \prod_{q\in S,q\neq p}}e^{-4\pi \beta(q) G_q}$. For the first term we have
\begin{eqnarray}\label{espo1}
\int_{B_{r_\eps}} e^{-4\pi \alpha G_p} e^{\ph_\eps} dv_g &=&\eps \int_{B_{r_\eps}} e^{2\alpha \log r -4\pi \alpha A(p) -4\pi \alpha \sigma }  e^{-2\log(\eps+r^{2(1+\alpha)})} dv_g =\nonumber \\
&=&\eps e^{-4\pi \alpha A(p)} \int_{B_{r_\eps}} \frac{r^{2\alpha}}{(\eps + r^{2(1+\alpha)})^2}(1+o_\eps(1)) dv_g
=\nonumber \\
&=&\frac{\pi e^{-4\pi \alpha A(p)}}{ 1+\alpha} \frac{\gamma_\eps^{2(1+\alpha)}}{1+\gamma_\eps^{2(1+\alpha)}}(1 +o_\eps(1)) = \nonumber \\ &=&\frac{\pi e^{-4\pi \alpha A(p)}}{ 1+\alpha} +o_\eps(1).
\end{eqnarray}
Since $\tilde{h}$ is smooth in a neighbourhood of $p$ we obtain
\begin{equation}\label{espo2}
\int_{B_{r_\eps}}\left(\tilde{h}-\tilde{h}(p)\right)  e^{-4\pi\alpha G_p} e^{\ph_\eps} dv_g = o_\eps(1) \int_{B_{r_\eps}}  e^{-4\pi\alpha G_p} e^{\ph_\eps} dv_g = o_\eps(1)
\end{equation} and
\begin{eqnarray}\label{espo3}
\left|\int_{B_\delta\bs B_{r_\eps}} h e^{\ph_\eps} dv_g \right|&=& \left|\int_{B_\delta\bs B_{r_\eps}}\tilde{h} e^{-4\pi \alpha G_p }e^{\ph_\eps} dv_g \right| \le \sup_{B_\delta} |\tilde{h}|\int_{B_\delta\bs B_{r_\eps}}  e^{-4\pi \alpha G_p }e^{\ph_\eps}dv_g=\nonumber\\
&=& \eps e^{C_\eps} \sup_{B_\delta}|\tilde{h}|\int_{B_\delta \bs B_{r_\eps}} e^{4\pi(2+\alpha) G_p } e^{-\ov{\rho} \eta_\eps \sigma } dv_g =  \nonumber\\ 
&=&O(\eps)\int_{B_\delta \bs B_{r_\eps}} e^{4\pi(2+\alpha) G_p } dx  
=  O(\eps) \int_{B_\delta \bs B_{r_\eps}} \frac{1}{|x|^{2(2+\alpha)}}   dx=\nonumber \\
&=& O(\eps)\left( \frac{1}{r_\eps^{2(1+\alpha)}}-\frac{1}{\delta^{2(1+\alpha)}}\right) = O\left(\frac{1}{\gamma_\eps^{2(1+\alpha)}}\right) +O(\eps)=o_\eps(1).
\end{eqnarray}
Finally
\begin{equation}\label{espo4}
\int_{\Sigma\bs B_\delta} h e^{\ph_\eps} dv_g = \eps e^{C_\eps} \int_{\Sigma\bs B_\delta} h e^{\ov{\rho} G_p} dv_g = O(\eps)
\end{equation}
so by (\ref{espo1}), (\ref{espo2}), (\ref{espo3}) and (\ref{espo4}) we have
\begin{equation}\label{p3}
\int_{\Sigma} h  e^{\ph_\eps} dv_g  = \frac{\pi \tilde{h}(p) e^{-4\pi \alpha A(p)}}{ 1+\alpha} +o_\eps(1).
\end{equation}

Using \eqref{grad}, \eqref{media} and \eqref{p3} we get
$$
\lim_{\eps \to 0}J(\ph_\eps)
=-\ov{\rho}\left( 1+4\pi A(p)+ \log\left( \frac{1}{|\Sigma|} \frac{\pi \tilde{h}(p) }{ 1+\alpha} \right) \right) =$$
$$
=-\ov{\rho}\left( 1+\log\frac{\pi}{|\Sigma|}+\max_{\xi\in \Sigma ,\beta(\xi)=\alpha}  \left\{4\pi A(\xi)+ \log  \left(\frac{K(\xi)}{1+\alpha}\prod_{q\in S, q\neq \xi} e^{-4\pi\beta(q) G_q(\xi)}\right)\right\}\right).
$$
This, together with proposition \ref{prop1}, completes the proof of theorem \ref{teo 1}.

\section{Onofri's inequalities on $S^2$}
In this section we will consider the special case of the standard sphere $(S^2,g_0)$ with $m\le 2$ and $K\equiv 1$. We fix  $\alpha_1,\alpha_2\in \R$ with $-1<\alpha_1\le\alpha_2$ and as before we consider  the singular weight
\begin{equation*}
h= e^{-4\pi \alpha_1 G_{p_1}-4\pi \alpha_2 G_{p_2}}. 
\end{equation*}
In order to apply theorem \ref{teo 1} and obtain sharp versions of \eqref{troy}, we need to study the existence of minimum points for the functional $J$.
Let us fix a system of coordinates $(x_1,x_2,x_3)$ on $\R^3$ such that $p_1=(0,0,1)$. If $\min\{\alpha_1,\alpha_2\}\ge0$, $h$ is smooth in $S^2$ and the Kazdan-Warner identity (see \cite{KW}) states that any solution of \eqref{eq nonsing} has to satisfy
$$
\int_{S^2} \nabla h \cdot \nabla x_i \; e^u\; dv_{g_0}= \left(2-\frac{\rho}{4\pi}\right) \int_{S^2} h e^{u} x_i \;dv_{g_0}\quad i=1,2,3.
$$
We claim that if $p_2=-p_1$ the same identity holds, at least in the $x_3$-direction, even when $h$ is singular.

\begin{lemma}\label{lemma traccia}
Let $u$ be a solution of \eqref{eq nonsing} on $S^2$, then there exist $C,\delta_0>0$ such that\\[0.3cm]
\begin{tabular}{ll}
 \tabitem $|\nabla u(x)|\le C d(x,p_i)^{2\alpha_i +1}$  & if $\alpha_i<- \frac{1}{2}$;  \\[0.3cm]
 \tabitem $|\nabla u(x)|\le C  \left(-\log d(x,p_i)\right)\quad $  & if $\alpha_i=- \frac{1}{2}$; \\[0.3cm]
 \tabitem  $|\nabla u(x)|\le C$  & if $\alpha_i>- \frac{1}{2}$;\\[0.3cm]
\end{tabular}

for $0<d(x,p_i)<\delta_0$, $\;i=1,2.$
\end{lemma}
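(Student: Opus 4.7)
Fix $i\in\{1,2\}$ and pass to normal coordinates centred at $p_i$, identifying a neighbourhood of $p_i$ with a Euclidean ball $B_\delta\subset\R^2$. Using the expansion $G_{p_i}(y)=-\frac{1}{2\pi}\log|y|+A(p_i)+O(|y|)$ one gets, near $0$,
\begin{equation*}
h(y)=c_i\,|y|^{2\alpha_i}\bigl(1+O(|y|)\bigr),\qquad c_i>0,
\end{equation*}
since the factor $e^{-4\pi\alpha_j G_{p_j}}$ with $j\neq i$ is smooth near $0$. Exactly as in Remark~\ref{oss reg}, the fact that $h\in L^q_{\mathrm{loc}}$ for some $q>1$ together with \eqref{eq nonsing} forces $u\in W^{2,q}_{\mathrm{loc}}\hookrightarrow C^{0,\gamma}_{\mathrm{loc}}$, so $u$ is bounded on $B_\delta$ and the right-hand side $F:=\rho\bigl(\frac{he^u}{\int he^u dv_g}-\frac{1}{|\Sigma|}\bigr)$ of $-\Delta u=F$ satisfies $|F(y)|\le C(|y|^{2\alpha_i}+1)$.

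Next I would split $u=w+v$ on $B_\delta$, where $w$ solves $-\Delta w=F$ in $B_\delta$ with zero Dirichlet data. The remainder $v$ is harmonic and bounded in $B_\delta$, so $|\nabla v|\le C$ on $B_{\delta/2}$ by the interior gradient estimate for harmonic functions. The lemma thus reduces to pointwise estimates for $\nabla w$.

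Green's representation on the disc, together with the standard bound $|\nabla_x G(x,y)|\le C/|x-y|$, yields
\begin{equation*}
|\nabla w(x)|\le C\int_{B_\delta}\frac{|y|^{2\alpha_i}+1}{|x-y|}\,dy.
\end{equation*}
Setting $r=|x|$ and splitting the domain into the three regions $\{|y|\le r/2\}$, $\{r/2<|y|<2r\}$, $\{|y|\ge 2r\}$, one uses in turn $|x-y|\ge r/2$, $|y|^{2\alpha_i}\le Cr^{2\alpha_i}$ and $|x-y|\ge |y|/2$. The first two contributions are $O(r^{2\alpha_i+1})+O(1)$, while the third reduces (in polar coordinates) to $\int_{2r}^{\delta}s^{2\alpha_i}\,ds+O(1)$, which evaluates to $O(r^{2\alpha_i+1})$ if $\alpha_i<-\frac12$, $O(|\log r|)$ if $\alpha_i=-\frac12$, and $O(1)$ if $\alpha_i>-\frac12$. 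Combining with the bound on $\nabla v$ yields exactly the three estimates in the statement.

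The only non-routine point is the three-region case analysis of the singular integral above; all other steps are standard elliptic and regularity arguments. One also has to check that passing from the Euclidean gradient in normal coordinates to the Riemannian gradient $\nabla_{g_0}u$ only introduces a multiplicative factor $1+O(|y|^2)$, which does not affect the exponents in the bounds.
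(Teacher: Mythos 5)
Your argument is correct in substance and rests on the same core computation as the paper: a Green-representation bound of the form $|\nabla u(x)|\lesssim \int d(y,p_i)^{2\alpha_i}d(x,y)^{-1}\,dv_{g_0}(y)+O(1)$, followed by a direct estimate of this Riesz-type potential, which produces exactly the three regimes $r^{2\alpha_i+1}$, $-\log r$, $O(1)$. The packaging differs: the paper works globally on $(S^2,g_0)$ with the sphere's Green function, transfers the singular integral to the plane by stereographic projection from the antipode $-p_i$, and rescales by $|\pi(x)|$, whereas you localize, split $u=w+v$ with $w$ the Dirichlet solution on a small disc and $v$ a bounded harmonic remainder, and then do a three-region splitting $\{|y|\le r/2\}$, $\{r/2<|y|<2r\}$, $\{|y|\ge 2r\}$. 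Both routes are fine; your splitting also covers $\alpha_i>-\tfrac12$ by the same integral, while the paper dispatches that case more cheaply by noting $h\in L^q$ locally with $q>2$, hence $u\in W^{2,q}\hookrightarrow C^1$ near $p_i$.

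One point you should tighten: in \emph{normal} coordinates the operator is not the Euclidean Laplacian, so "$-\Delta u=F$" hides correction terms $(g^{jk}-\delta^{jk})\partial_{jk}u+b^k\partial_k u$; since for $\alpha_i\le-\tfrac12$ you only control $u$ in $W^{2,q}$ with $q<2$, these terms are not pointwise dominated by $C(|y|^{2\alpha_i}+1)$ and cannot simply be absorbed into $F$. The clean fix is to use conformal (isothermal) coordinates --- e.g. stereographic projection from $-p_i$, as in the paper --- in which $\Delta_{g_0}$ becomes a smooth positive multiple of $\Delta_{\R^2}$ near $p_i$, so the equation and the pointwise bound on the right-hand side transfer exactly and the rest of your argument goes through verbatim; alternatively, keep the representation formula on $(S^2,g_0)$ itself, which is what the paper does. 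With that adjustment your proof is complete.
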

\begin{proof} Let us fix $0<r_0<\frac{1}{2}\min\{\frac{\pi}{2},d(p_1,p_2)\}$ and $i\in \{1,2\}$.
If $\alpha_i>- \frac{1}{2}$ then, by standard elliptic regularity, $u \in C^1(\ov{B_{r_0}(p_i)})$ and the conclusion holds for $\delta_0=r_0$ and $C=\|\nabla u\|_{L^\infty(B_{r_0}(p_i))}$. Let us now assume $\alpha_i \le-\frac{1}{2}$. We know that $h(y)\le C_1 d(y,p_i)^{2\alpha_i}$ for $y\in B_{2r_0}(p_i)$ so, if $\delta_0 < r_0$, by Green's representation formula we have
$$
|\nabla u|(x)\ \le \rho e^{\|u \|_\infty} \int_{S^2} \frac{h(y)}{d(x,y)} dv_{g_0}(y)\le \frac{\rho e^{\|u\|_\infty} \|h\|_{L^1(S^2)}}{r_0} +  \rho e^{\|u \|_\infty} C_1 \int_{B_{r_0}(x)} \frac{d(y,p_i)^{2\alpha_i}}{d(x,y)}dv_{g_0}(y)
.$$
Let $\pi$ be the stereographic projection from the point $-p_i$. It is easy to check that there exist $C_2,C_3>0$ such that
$$
C_2 \; d(q,q') \le |\pi(q)-\pi(q')|\le C_3\; d(q,q')
$$
$\forall \; q,q'\in B_{\frac{\pi}{2}}(p_i)$. 
Thus we have
$$
\int_{B_{r_0}(x)} \frac{d(y,p_i)^{2\alpha_i}}{d(x,y)}dv_{g_0}(y) \le \int_{B_{\frac{\pi}{2}}(p_i)} \frac{d(y,p_i)^{2\alpha_i}}{d(x,y)}dv_{g_0}(y) \le C_4 \int_{\{|z|\le 1\}} \frac{|z|^{2\alpha_i}}{|\pi(x)-z|} dz =$$
$$
= C_4 |\pi (x)|^{2\alpha_i+1}\int_{\left\{|z|\le \frac{1}{|\pi(x)|}\right\}} \frac{|z|^{2\alpha_i}}{\left|\frac{\pi(x)}{|\pi(x)|}-z\right|}dz \le  C_5 d(x,p_i)^{2\alpha_i+1} \int_{\left\{|z|\le \frac{1}{|\pi(x)|}\right\}} \frac{|z|^{2\alpha_i}}{\left|\frac{\pi(x)}{|\pi(x)|}-z\right|} dz.
$$
Notice that 
$$
\int_{\left\{|z|\le \frac{1}{|\pi(x)|}\right\}} \frac{|z|^{2\alpha_i}}{\left|\frac{\pi(x)}{|\pi(x)|}-z\right|} dz \le  \frac{1}{2^{2\alpha_i}} \int_{\left\{\left|\frac{\pi(x)}{|\pi(x)|}-z\right|\le \frac{1}{2}\right\}}\frac{1}{\left|\frac{\pi(x)}{|\pi(x)|}-z\right|} dz +2 \int_{\{|z|\le 2\}} |z|^{2\alpha_i}dz + 2\int_{\left\{2\le |z|\le \frac{1}{|\pi(x)|}\right\}} |z|^{2\alpha_i-1} dz \le
$$
$$
\le C_6+ 2  \int_{\left\{2\le |z|\le \frac{1}{|\pi(x)|}\right\}} |z|^{2\alpha_i-1} dz.
$$
If $\alpha_i<-\frac{1}{2}$
$$
 \int_{\left\{2\le |z|\le \frac{1}{|\pi(x)|}\right\}} |z|^{2\alpha_i-1} dz \le C_7,
$$
while if $\alpha_i=-\frac{1}{2}$
$$
\int_{\left\{2\le |z|\le \frac{1}{|\pi(x)|}\right\}} |z|^{2\alpha_i-1} dz = 2\pi \log\left( \frac{1}{2|\pi(x)|}\right) \le C_8 \left( - \log d(x,p_i) \right).
$$
Thus we get the conclusion for $\delta_0$ sufficiently small. 
\end{proof}

In any case there exists $s\in [0,1)$ such that
\begin{equation}\label{stima gradiente}
|\nabla u(x)|\le C d(x,p_i)^{-s} \left(-\log d(x,p_i)\right)
\end{equation}
for $0<d(x,p_i)<\delta_0$, $\;i=1,2.$

\begin{prop}\label{prop poho}
If $p_2=-p_1$ then any solution of \eqref{eq nonsing} satisfies
$$
\int_{S^2} \nabla h \cdot \nabla x_3 \; e^u \; dv_{g_0} = \left(2-\frac{\rho}{4\pi}\right) \int_{S^2} h e^{u} x_3 \; dv_{g_0}.
$$
\end{prop}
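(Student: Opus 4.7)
The proof is a Kazdan--Warner style identity, made delicate by the singularities of $h$ at $p_1$ and $p_2$. The antipodal hypothesis $p_2=-p_1$ is essential: with $p_1=(0,0,1)$, both singular points of $h$ coincide with the critical points of the coordinate $x_3$ on $S^2$, so $\nabla_{g_0} x_3$ \emph{vanishes} at each $p_i$. This is precisely what allows the boundary contributions arising from excising small balls around the singularities to be controlled.

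The strategy is to work on $\Omega_\eps := S^2\bs (B_\eps(p_1)\cup B_\eps(p_2))$ and pass to the limit $\eps \to 0$. Setting $Z:=\int_{S^2}he^u\,dv_{g_0}$ and using $-\Delta_{g_0}x_3=2x_3$, Green's formula yields
$$\int_{\Omega_\eps}\nabla(he^u)\cdot \nabla x_3\,dv_{g_0}=2\int_{\Omega_\eps}he^u x_3\,dv_{g_0}+o_\eps(1),$$
the error coming from $\int_{\partial B_\eps(p_i)}he^u \,\partial_n x_3\,d\sigma=O(\eps^{2\alpha_i+2})$ (since $|\nabla x_3|=O(\eps)$, $he^u=O(\eps^{2\alpha_i})$, arc length $O(\eps)$, and $\alpha_i>-1$). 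Splitting $\nabla(he^u)=e^u\nabla h+he^u\nabla u$, the task reduces to proving
$$\int_{\Omega_\eps}he^u\nabla u\cdot \nabla x_3\,dv_{g_0}=\frac{\rho}{4\pi}\int_{\Omega_\eps}he^u x_3\,dv_{g_0}+o_\eps(1). \quad (\star)$$

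To prove $(\star)$ I would substitute $he^u=(Z/\rho)(-\Delta u+\rho/(4\pi))$ from the equation. The term $(Z/\rho)\int_{\Omega_\eps}(-\Delta u)(\nabla u\cdot \nabla x_3)\,dv_{g_0}$ is $o_\eps(1)$ thanks to the Pohozaev identity for the \emph{conformal} vector field $X=\nabla x_3$ in dimension two: the standard computation of $\int_M(-\Delta u)(X\cdot \nabla u)$ shows that the interior contribution (involving $\tfrac{1}{2}(\text{div}\,X)|\nabla u|^2$ and $\langle \mathcal{L}_X g_0,du\otimes du\rangle$) cancels identically in 2D for conformal $X$, so only the boundary terms $\int_{\partial B_\eps(p_i)}\bigl[\tfrac{1}{2}|\nabla u|^2\,X\cdot n-(\partial_n u)(X\cdot \nabla u)\bigr]d\sigma$ remain, and these are $O(\eps^{2-2s}(\log\eps)^2)\to 0$ by \eqref{stima gradiente} together with $|X|=O(\eps)$ and arc length $O(\eps)$, since $s<1$. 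The remaining term $(Z/(4\pi))\int_{\Omega_\eps}\nabla u\cdot \nabla x_3\,dv_{g_0}$ equals $(Z/(2\pi))\int_{\Omega_\eps}u\,x_3\,dv_{g_0}+o_\eps(1)$ after one integration by parts (boundary error bounded by $\|u\|_\infty\cdot O(\eps^2)$, using $u\in L^\infty$). Pairing $x_3$ with the equation on all of $S^2$ gives $2\int u\,x_3\,dv_{g_0}=(\rho/Z)\int he^u x_3\,dv_{g_0}$, and then $he^u\in L^1$ together with $u\in L^\infty$ lets one replace $S^2$ by $\Omega_\eps$ up to $o_\eps(1)$, establishing $(\star)$.

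Passing to the limit $\eps\to 0$ is legitimate because $|\nabla h\cdot \nabla x_3|\,e^u=O(d(x,p_i)^{2\alpha_i})$ is integrable near each $p_i$ for $\alpha_i>-1$. The principal obstacle is the control of the boundary terms in the conformal Pohozaev identity; this is exactly the step where the vanishing $\nabla x_3(p_i)=0$ (forced by $p_2=-p_1$) combines with the pointwise gradient estimate \eqref{stima gradiente} to give the required decay. Without the antipodal assumption one would have a nonzero $\nabla x_3$ at a singular point, and the above boundary estimate would fail.
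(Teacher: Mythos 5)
Your proof is correct and follows essentially the same route as the paper: excise balls around the antipodal singularities, exploit that $X=\nabla x_3$ is conformal Killing (Hessian proportional to $g_0$) so that $\int(-\Delta u)(\nabla u\cdot\nabla x_3)$ reduces to boundary terms controlled by the gradient estimate \eqref{stima gradiente}, and control the $h e^u\,\partial_n x_3$ boundary terms using $\alpha_i>-1$ together with the vanishing of $\nabla x_3$ at $p_1,p_2$. The only deviation is cosmetic: you integrate $\int_{\Omega_\eps}\nabla u\cdot\nabla x_3$ by parts onto $\Delta x_3$ and invoke the global weak formulation tested against $x_3$ (needing only $u\in L^\infty$), whereas the paper moves the Laplacian onto $u$ and uses the equation on $S_\delta$ directly; both are fine.
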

\begin{proof}
Without loss of generality we may assume
\begin{equation}\label{norm sfera}
\int_{S^2} h e^{u} dv_{g_0}=1.
\end{equation}
Let us denote $S_\delta= S^2\bs B_{\delta}(p_1)\cup B_\delta (p_2)$.  Since $u$ is smooth in $S_\delta$, multiplying \eqref{eq nonsing} by $\nabla u\cdot \nabla x_3$ and integrating on $S_\delta$ we have 
\begin{equation}\label{sfera1}
-\int_{S_\delta} \Delta u  \; \nabla u\cdot \nabla x_3 \:  dv_{g_0}=\rho  \int_{S_\delta} \left(h\; e^{u} -\frac{1}{4\pi}\right) \nabla u \cdot \nabla x_3 \; dv_{g_0}
\end{equation}
Integrating by parts we obtain 
$$
-\int_{S_\delta} \Delta u  \; \nabla u\cdot \nabla x_3\; dv_{g_0} =  \int_{S_\delta} \nabla u \cdot \nabla (\nabla u \cdot \nabla x_3)dv_{g_0} +\sum_{i=1}^2 \int_{\partial B_\delta(p_i)}\nabla u \cdot \nabla x_3 \D{u}{n} d\sigma_{g_0} 
$$
and by \eqref{stima gradiente}
$$
\left|\int_{\partial B_\delta(p_i)}\nabla u \cdot \nabla x_3 \;  \D{u}{n}\; d\sigma_{g_0}\right|\le \int_{\partial B_{\delta}(p_i)} |\nabla u|^2 |\nabla x_3| d\sigma_{g_0} =    O(\delta^{2(1-s)} \log^2 \delta)=o_\delta(1).
$$
Using the  identities
$$
\nabla u \cdot \nabla (\nabla u \cdot \nabla x_3)= \frac{1}{2}\nabla |\nabla u|^2 \cdot \nabla x_3 -x_3|\nabla u|^2 
$$
and $$-\Delta x_3 = 2x_3,$$
and applying again \eqref{stima gradiente} to estimate the boundary term, we get
$$
-\int_{S_\delta} \Delta u  \; \nabla u\cdot \nabla x_3 \; dv_{g_0}=\int_{S_\delta}  \frac{1}{2}\nabla |\nabla u|^2 \cdot \nabla x_3 \; dv_{g_0} -\int_{S_\delta} x_3 |\nabla u|^2  dv_{g_0}+o_\delta(1)= $$
$$
=-\frac{1}{2}\int_{S_\delta} \Delta x_3 \; |\nabla u|^2 dv_{g_0} -\sum_{i=1}^2\int_{\partial B_\delta(p_i)} |\nabla u|^2\D{x_3}{n} d\sigma_{g_0} -\int_{S_\delta} x_3 |\nabla u|^2 dv_{g_0} =o_\delta(1).$$

Thus (\ref{sfera1}) becomes
\begin{equation}\label{sfera2}
\int_{S_\delta} h e^{u}\nabla u \cdot \nabla x_3 \; dv_{g_0} - \frac{1}{4\pi} \int_{S_\delta} \nabla u \cdot \nabla x_3\; dv_{g_0}=o_\delta(1).
\end{equation}

Moreover
$$
\int_{S_\delta} \nabla u \cdot \nabla x_3\; dv_{g_0}= -\int_{S_\delta} \Delta u \; x_3\; dv_{g_0} -\sum_{i=1}^2 \int_{\partial B_\delta(p_i) } x_3 \D{u}{n} \; d\sigma_{g_0}= $$
\begin{equation*}
=\rho \int_{S_\delta} \left(h e^u-\frac{1}{4\pi}\right)x_3 \;dv_{g_0}+ O(\delta^{1-s}(-\log \delta))
=\rho \int_{S_\delta} h e^u x_3 \;dv_{g_0}+o_\delta(1) 
\end{equation*}
and 
$$
\int_{S_\delta } h e^{u} \; \nabla u \cdot \nabla x_3\; dv_{g_0} = \int_{S_\delta}  \nabla e^{u} \cdot h \nabla x_3 \; dv_{g_0} = - \int_{S_\delta} e^u \dv(\;h \nabla x_3) dv_{g_0} -\sum_{i=1}^2\int_{\partial B_\delta(p_i)} h e^{u} \D{x_3}{n}\; d\sigma_{g_0}=$$
$$
=-\int_{S_\delta}  \nabla h \cdot \nabla x_3 \; e^{u} \; dv_{g_0} + 2 \int_{S_\delta} h e^u x_3 dv_{g_0} +O(\delta^{2(1+\alpha)}).
$$
Thus  by \eqref{sfera2} we have
$$
\int_{S_\delta}  \nabla h \cdot \nabla x_3 \; e^{u}\; dv_{g_0}  = \left(2-\frac{\rho}{4\pi}\right) \int_{S_\delta} h e^u x_3\; dv_{g_0}+o_\delta(1).
$$ 
Since $u$ is continuous on $S^2$ and $h, \nabla h \cdot \nabla x_3\in L^1(S^2)$ as $\delta\to 0 $ we get the conclusion.
\end{proof}

\begin{oss}
In this proof there is no need to assume $K\equiv 1$.
\end{oss}

Assuming $p_1=(0,0,1)$ and $p_2=(0,0,-1)$, one may easily  verify that
$$
G_{p_1}(x)=-\frac{1}{4\pi} \log(1-x_3) -\frac{1}{4\pi} \log \left(\frac{e}{2}\right)
$$
and 
$$
G_{p_2}(x)=-\frac{1}{4\pi} \log(1+x_3) -\frac{1}{4\pi} \log \left(\frac{e}{2}\right),
$$
so that 
$$
\nabla h \cdot\nabla x_3 = -4\pi  h (\alpha_1 \nabla G_1+ \alpha_2 \nabla G_2)\cdot \nabla x_3=  (\alpha_2 -\alpha_1)h - (\alpha_1+\alpha_2)h x_3.
$$
Thus we can rewrite the identity in proposition \ref{prop poho} as
\begin{equation}\label{poho}
\alpha_2-\alpha_1 = \left(2-\frac{\rho}{4\pi} +\alpha_1+\alpha_2\right) \int_{S^2} h e^u x_3 \;dv_{g_0}.
\end{equation}
\begin{proof}[Proof of theorem \ref{una sing}.]
Assume $m=1$ (i.e. $\alpha_2=0$). We claim that equation \eqref{eq nonsing} has no solutions for $\rho = \ov{\rho} = 8\pi(1+\min\{0,\alpha_1\})$, unless $\alpha_1 =0$. Indeed if $u$ were a solution of   \eqref{eq nonsing} satisfying \eqref{norm sfera}, then applying \eqref{poho} with $\rho = \ov{\rho}$ we would get
$$
-\alpha_1 = \left( \alpha_1-2\min\{0,\alpha_1\}\right) \int_{S^2} h e^u x_3\;dv_{g_0}
$$
so that, if $\alpha_1\neq 0$,
$$
\left|\int_{S^2} h e^{u} x_3\; dv_{g_0} \right| =1.
$$
This contradicts \eqref{eq nonsing}. In particular we proved non-existence of minimum points for $J_{\ov{\rho}}$ so we can exploit theorem \ref{teo 1} and \eqref{C e inf} to prove that $\eqref{troy}$ holds with
$$
C=  \max_{p\in S^2 ,\beta(p)=\alpha } \left\{\log  \left(\frac{1}{1+\alpha}\prod_{q\in S, q\neq p} e^{-4\pi\beta(q) G_q(p)}\right)\right\}.
$$
If $\alpha_1<0$ one has $$C=-\log(1+\alpha_1).$$
If $\alpha_1>0$,
$$
C= \max_{p\in S^2\bs\{p_1\} } \left\{ -4\pi \alpha_1  G_{p_1}(p)\right\}=-4\pi \alpha_1 G_{p_1}(p_2) = \alpha_1.
$$
\end{proof}

\begin{proof}[Proof of theorem \ref{due sing}]
As in the previous proof, applying  \eqref{poho} with $\rho = \ov{\rho} =8\pi(1+\alpha_1)$, we obtain that any critical point of \eqref{eq nonsing} for which \eqref{norm sfera} holds has to satisfy
$$
\alpha_2-\alpha_1= (\alpha_2 -\alpha_1)\int_{S^2} h e^u x_3 dv_{g_0}.
$$ 
Since $\alpha_1 \neq \alpha_2$ one has 
$$
\int_{S^2} h e^u x_3 dv_{g_0}=1
$$
that is impossible. Thus $J_{\ov{\rho}}$ has no critical point and by theorem \ref{teo 1} one has 
$$C=\log  \left(\frac{1}{1+\alpha_1}e^{-4\pi \alpha_2 G_{p_2}(p_1)}\right)= \alpha_2-\log(1+\alpha_1).
$$
\end{proof}

Now we assume $\alpha_1=\alpha_2<0$. In this case  identity (\ref{poho}) gives no useful condition.   Let us denote by $\pi$ the stereographic projection  from the point $p_1$. It is easy to verify that  $u$ satisfies  \eqref{eq nonsing}  and \eqref{norm sfera} if and only if   
$$
v:= u\circ \pi^{-1} +(1+\alpha)\log\left(\frac{4}{(1+|y|^2)^2}\right)+2\alpha \log\left(\frac{e}{2}\right)
$$
solves 
\begin{equation}\label{equazione piano}
-\Delta_{\R^2} v = 8\pi (1+\alpha)|y|^{2\alpha} e^{v}  
\end{equation}
in $\R^2$ and
$$
\int_{\R^2} |y|^{2\alpha} e^{v} dy =1.
$$
As we pointed out in the proof of lemma \ref{lemma risc}, equation \eqref{equazione piano} has a one-parameter family of solutions:
$$
v_\lambda(y)= -2 \log \left(1+\frac{\pi}{1+\alpha} e^l |y|^{2(1+\alpha)}\right)
$$
$l\in \R$.  Thus we have a corresponding family $\{u_{\lambda,c}\}$ of critical points of $J_{\ov{\rho}}$ given by the expression
\begin{equation}\label{mini}
u_{\lambda,c}\circ \pi^{-1}(y) = 2\log\left(\frac{(1+|y|^2)^{1+\alpha}}{1+ \lambda |y|^{2(1+\alpha)}}\right)+c,
\end{equation}
$c\in \R,\lambda>0$. A priori we do not know whether these critical points are minima for $J_{\ov{\rho}}$ (as it happens for $\alpha =0$), so a direct application of \ref{teo 1} is not possible. However, we can still get the conclusion by comparing $J_{\ov{\rho}}(u_{\lambda,c})$ with the blow-up value provided by theorem \ref{teo 1}.

\begin{proof}[Proof of theorem \ref{due sing uguali}]
Let us first compute $J(u_{\lambda,c})$. Let $\ph_t:S^2\ra S^2$ be the conformal transformation defined by $\pi(\ph_t (\pi^{-1}(y)))= t y$. It is not difficult to prove that $\forall \; t>0$
$$
J_{\ov{\rho}}(u)=J_{\ov{\rho}}(u \circ \ph_t + (1+\alpha)\log|\det d\ph_t|);
$$
in particular, since $$u_{\lambda,c}= u_{1,0}\circ \ph_{\lambda^\frac{1}{2(1+\alpha)}} +(1+\alpha) \log|\det\ph_{\lambda^\frac{1}{2(1+\alpha)}}| +c-\log\lambda,$$ 
we have that $J(u_{\lambda,c})$ does not depend on $\lambda$ and $c$. Thus we may assume $\lambda=1$ and $c=0$. A simple computation shows that
\begin{equation}\label{fine1}
\int_{S^2} h \; e^{u_{1,0}} dv_{g_0}= 4 e^{2\alpha} \int_{\R^2}\frac{|y|^{2\alpha}}{\left(1+|y|^{2(1+\alpha)}\right)^2} dy = \frac{4 e^{2\alpha} \pi}{1+\alpha}.
\end{equation}

Since $u_{1,0}(p_1)=0$ and $u_{1,0}$ solves
\begin{equation*}
-\Delta  u_{1,0} = \omega \;  h \;  e^{u_{1,0}} -2(1+\alpha) \quad \mbox{with} \quad\omega:=2(1+\alpha)^2 e^{-2\alpha}
\end{equation*}
one has
$$
\int_{S^2} u_{1,0} \; dv_{g_0} = 4\pi  \int_{S^2} \Delta u_{1,0}\; G_{p_1} dv_{g_0}= -4\pi \omega\int_{S^2} h e^{u_{1,0}} G_{p_1} dv_{g_0}
$$
and
$$
\frac{1}{2}\int_{S^2}|\nabla u_{1,0}|^2dv_{g_0} +2(1+\alpha) \int_{S^2} u_{1,0}\; dv_{g_0} =  \frac{1}{2} \omega \int_{S^2} h e^{u_{1,0}} u_{1,0} \;  dv_{g_0} + (1+\alpha)\int_{S^2} u_{1,0}\; dv_{g_0} =
$$
\begin{equation}\label{fine2}
= \frac{\omega}{2}\int_{S^2} h e^{u_{1,0}} ( u_{1,0}-\ov{\rho} G_{p_1}) dv_{g_0}.
\end{equation}
Since 
$$
G_{p_1}(\pi^{-1}(y)):= \frac{1}{4\pi} \log(1+|y|^2) -\frac{1}{4\pi}
$$
we get 
$$
\int_{S^2} h e^{u_{1,0}} ( u_{1,0}-\ov{\rho} G_{p_1}) = 2(1+\alpha)\int_{S^2} h e^{u_{1,0}} dv_{g_0}-8e^{2\alpha}\int_{\R^2} \frac{|y|^{2\alpha} \log\left({1+|y|^{2(1+\alpha)}}\right)}{\left(1+|y|^{2(1+\alpha)}\right)^2} dy= 
$$
\begin{equation}\label{fine3}
= 8\pi e^{2\alpha}-\frac{8 \pi e^{2\alpha}}{1+\alpha} \int_{0}^{+\infty } \frac{\log(1+s)}{(1+s)^2}ds = \frac{8\pi \alpha e^{2\alpha}}{1+\alpha}.
\end{equation}

Using \eqref{fine1}, \eqref{fine2} and \eqref{fine3} we obtain
$$
J(u_{\lambda,c})=J(u_{1,0})= 8\pi (1+\alpha)\left(\log(1+\alpha)-\alpha\right) \qquad \forall\; \lambda>0, c \in \R.
$$
To conclude the proof it is sufficient to observe that $u_{\lambda,c}$ have to be minimum points for $J_{\ov{\rho}}$ that is 
$$
\inf_{H^1(S^2)} J_{\ov{\rho}} = 8\pi (1+\alpha)\left(\log(1+\alpha)-\alpha\right).
$$
Indeed if this were false then $J_{\ov{\rho}}$ would have no minimum points but, by theorem \ref{teo 1}, we would get 
$$
\inf_{H^1(S^2)} J_{\ov{\rho}} = 8\pi (1+\alpha)\left(\log(1+\alpha)-\alpha\right)= J(u_{\lambda,c}).
$$ This is clearly a contradiction.
\end{proof}

\begin{oss}
There is no need to assume $p_1=-p_2$.
\end{oss} 
Indeed given two arbitrary points $p_1,p_2\in S^2$ with $p_1\neq p_2$ it is always possible to find a conformal diffeomorphism $\ph:S^2 \ra S^2$ such that $\ph^{-1}(p_1) = -\ph^{-1}(p_2)$. Moreover   one has
$$
J_{\ov{\rho}}(u)= \widetilde{J_{\ov{\rho}}}(u\circ \ph  + (1+\alpha)\log|\det d \ph|)+c_{\alpha,p_1,p_2}
$$
$\forall \; u \in H^1(S^2)$, where $\widetilde{J}$ is the Moser-Trudinger functional associated to 
$$
\widetilde{h}= e^{-4\pi \alpha G_{\ph^{-1}(p_1)}-4\pi \alpha G_{\ph^{-1}(p_2)}}.
$$
and $c_{\alpha,p_1,p_2}$ is an explicitly known constant depending only on $\alpha$, $p_1$ and $p_2$.
In particular one can still compute $\min_{H^1(S^2)} J_{\ov{\rho}}$ and describe the minimum points of $J_{\ov{\rho}}$ in terms of $\ph$ and the family \eqref{mini}.

\section*{Acknowledgements}
The author would like to express his gratitude to Professor Andrea Malchiodi for many valuable discussions and for his guidance during the preparation of this work.

\bibliographystyle{plain}
\bibliography{biblio}

\end{document}